\newtheorem{theo}{Theorem}
\newtheorem{lem} [theo]{Lemma}
\newtheorem{prop}[theo]{Proposition}
\newtheorem{conj}[theo]{Conjecture}
\newtheorem{algor}[theo]{Algorithm}
\makeatletter \@addtoreset{equation}{section}
\def\bD{\overline{D}}
\newcommand{\area}{\operatorname{\texttt{area}}}
\newcommand{\dinv}{\operatorname{\texttt{dinv}}}
\newcommand{\bounce}{\operatorname{\texttt{bounce}}}
\def\k{\vec{k}}
\def\r{\dot{r}}
\def\S{\mathbb{S}}
\def\d{\mathbb{S'}}
\def\W{\mathbb{W}}
\def\L{\mathcal{L}}
\def\R{\mathcal{R}}
\def\U{\mathbf{U}}
\def\m{<^s}
\def\SW{\texttt{SW}}
\def \CD {{\cal D}}
\def \TAU {{\cal T}}
\def\oD{\overline{D}}
\title{Dinv, Area, and Bounce for $\k$-Dyck paths}
\author{Guoce Xin$^{1,*}$ and Yingrui Zhang$^2$}
 \address{ $^{1,2}$School of Mathematical Sciences, Capital Normal University,
 Beijing 100048, PR China}
 \email{$^1$\texttt{guoce\_xin@163.com}\ \& $^2$\texttt{zyrzuhe@126.com}}
\date{November 9, 2020}
\begin{document}

\begin{abstract}
The well-known $q,t$-Catalan sequence has two combinatorial interpretations as weighted sums of ordinary Dyck paths: one is Haglund's area-bounce formula, and the other is
Haiman's dinv-area formula. The zeta map was constructed to connect these two formulas: it is a bijection from ordinary Dyck paths to themselves, and it takes dinv to area,
and area to bounce. Such a result was extended for $k$-Dyck paths by Loehr. The zeta map was extended by Armstrong-Loehr-Warrington for a very general class of paths.

In this paper, We extend the dinv-area-bounce result for $\k$-Dyck paths by: i) giving a geometric construction for the bounce statistic of a $\k$-Dyck path, which includes
the $k$-Dyck paths and ordinary Dyck paths as special cases; ii) giving
a geometric interpretation of the dinv statistic of a $\k$-Dyck path. Our bounce construction is inspired by Loehr's construction and Xin-Zhang's linear algorithm for
inverting the sweep map on $\k$-Dyck paths. Our dinv interpretation is inspired by Garsia-Xin's visual proof of dinv-to-area result on rational Dyck paths.
\end{abstract}

\maketitle

\noindent
\begin{small}
 \emph{Mathematic subject classification}: Primary 05A19; Secondary 05E99.
\end{small}

\noindent
\begin{small}
\emph{Keywords}: $q,t$-Catalan numbers; sweep map; $\k$-Dyck paths.
\end{small}

\section{Introduction}
In their study of the space $\mathcal{DH}_n$ of diagonal harmonics \cite{qt-Catalan}, Garsia and Haiman introduced a $q, t$-analogue of the Catalan numbers, which they
called the $q, t$-Catalan sequence. There are several equivalent characterizations of the (original) $q, t$-Catalan sequence, which includes two combinatorial
formulas as weighted sums over the set $\mathcal{D}_n$ of Dyck paths of length $n$: One is
Haiman's dinv-area $q,t$-Catalan sequence
$$HC_n(q, t) =\sum_{D\in \mathcal{D}_n}q^{\dinv(D)}t^{\area(D)} (n = 1, 2, 3,...);$$
the other is
Haglund's area-bounce $q,t$-Catalan sequence
$$C_n(q, t) = \sum_{D \in \mathcal{D}_n} q^{\area(D)} t^{\bounce(D)} (n = 1, 2, 3,...).$$
They are connected by a bijection, called the zeta map $\zeta$, from $\mathcal{D}_n$ to itself \cite[Section~5]{GCCLP-b-ideals,Haglund-bounce}. More precisely, we have the
bi--statistic equality:
$$(\area(\zeta(D)), \bounce(\zeta(D)) = (\dinv(D), \area(D)).$$
There are many interesting results
and generalizations related to the $q,t$-Catalan numbers $C_n(q,t)$. The $q,t$ symmetry $C_n(q,t)=C_n(t,q)$ is proved as a consequence of
the well-known Shuffle theorem of Carlsson and Mellit \cite{proofshuffle}, and its generalization for rational $q,t$ Catalan numbers is proved as a consequence of the
rational Shuffle theorem of Mellit \cite{Mellit}. But combinatorially proving these $q,t$ symmetry properties has been intractable.

D. Armstrong, N. Loehr, and G. Warrington \cite[Section 3.4]{sweepmap} introduced the sweep map for a very general class of paths,
including the zeta map for Dyck paths and rational Dyck paths as special cases. They proposed a modern view by using only one
statistic $\area$ and an appropriate sweep map $\Phi$. Then related polynomials can be constructed similarly by
defining $\dinv(D)=\area(\Phi(D))$, and $\bounce(D)= \area(\Phi^{-1}(D))$.
Several classes of polynomials constructed this way are conjectured to be jointly symmetric. See \cite[Section 6]{sweepmap}.

To attack the joint symmetry problem, we need a better understanding of the $\dinv$ or $\bounce$ statistic.
The modern view hardly helps because the construction of the sweep map is deceptively simple.
For instance, the invertibility of the sweep map for rational Dyck paths was open for over ten years,
until recently proved by Thomas-Williams in
\cite{Nathan} for a very general modular sweep map; the $\bounce$ statistic remains mysteries for rational Dyck paths.
See \cite{Rational-Invert} for further references.

Our main objective in this paper is two-folded. One is to give a geometric construction for the $\bounce$ statistic of a $\k$-Dyck path; the other is to give
a geometric interpretation of the $\dinv$ statistic of a $\k$-Dyck path. With a properly modified definition of the $\area$ statistic, we extend the
$\dinv$ sweeps to $\area$, and $\area $ to $\bounce$ result. The $\bounce$ result is inspired by a recent work of \cite{Xin-Zhang},
where a linear algorithm for $\Phi^{-1}$ was developed for $\k$-Dyck paths. Note that Thomas-Williams' general algorithm for $\Phi^{-1}$
is quadratic and is hard to be carried out by hand \cite{Nathan}. The $\dinv$ result is inspired by a recent work of \cite{dinv-area}, where
a visual interpretation of $\dinv$ was introduced.

\subsection{The sweep map and $\k$-Dyck paths}
To introduce the sweep map clearly, we use the following three models and some notions in \cite{Xin-Zhang}. For a vector $
 \k=(k_1,k_2,\dots,k_n)$ of $n$ positive integers, denote by $|\k|=\sum_{i=1}^nk_i$. Denote by $\CD_{\k}$ the set of all $\k$-Dyck paths. We will see that $\k$-Dyck paths reduce to ordinary Dyck paths when $k_i=1$ for all $i$, and reduce to $k$-Dyck paths when $k_i=k$ for all $i$.

Model $1$: Classical path model.
$\k$-Dyck paths are two dimensional lattice paths from $(0,0)$ to $(|\k|,|\k|)$ that never go below the main diagonal
$y=x$, with north steps of lengths $k_i$, $1\leq i \leq n$ from bottom to top, and east unit steps. Each vertex is
assigned a rank as follows. We start by assigning $0$ to $(0,0)$. This is done we add a $k_i$ as we go north with a length $k_i$ step,
and subtract a $1$ as we go east. Figure \ref{fig:model} illustrates an example of a
$\vec{k}$-Dyck path with $\k=(3,1,4,1,1)$.

\begin{figure}[!ht]
  $$
  \hskip 1.25in \oD= \hskip -1.95in\vcenter { \includegraphics[height= 2.06 in]{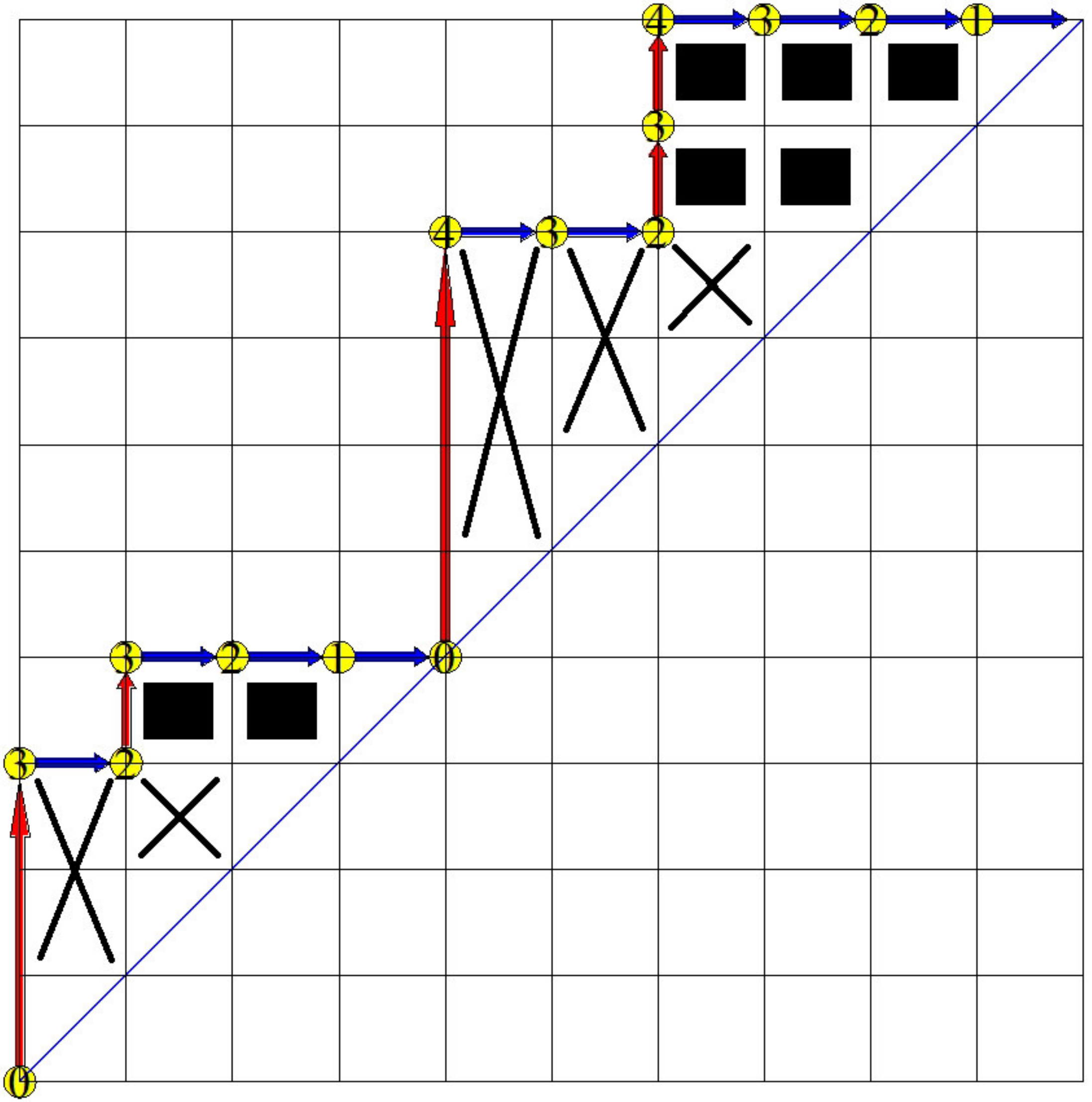}}
$$
\caption{An example of a $\vec{k}$-Dyck path in model 1.
\label{fig:model}}
\end{figure}

Model 2: Word model. For a $\oD \in \CD_{\k}$, the SW-word $\SW(\oD)=\sigma_1\sigma_2\cdots \sigma_{|\k|+n}$ is a natural encoding of $\oD$, where $\sigma_i$ is either an $S^{k_j}$ or a $W$ depending on whether the $i$-th vertex of $\oD$ is the $j$-th South end
(of the $j$-th North step) or a West end (of an East step).  The rank is then associated to each letter of $\SW(\oD)$ by assigning  $r_1=0$ to the first letter $\sigma_1=S^{k_1}$ and for $1\le i\le |\k|+n-1$, recursively
assigning $r_{i+1}$ to be either $r_i+k_j$ if the $i$-th letter $\sigma_i=S^{k_j}$, or $r_i-1$ if otherwise $\sigma_i=W$. We can then form the two line array $\left( \SW(\oD) \atop r(\oD)\right)$.
For instance for the path $\oD$ in Figure \ref{fig:model} this gives
\begin{align*}
  \left(\SW(\oD) \atop r(\oD)\right)=
 \left (\begin{array}
   {ccccccccccccccc}
   S^3 \!\!\! & W \!\!\! & S^1 \!\!\! & W \!\!\! & W \!\!\! &W \!\!\! & S^4 \!\!\!&  W \!\!\! & W \!\!\! &S^1 \!\!\!& S^1 \!\!\! & W \!\!\! & W \!\!\! & W \!\!\! & W  \\
   0 \!\!\! &  3 \!\!\! &  2 \!\!\! & 3 \!\!\! &  2 \!\!\! &  1 \!\!\! &  0 \!\!\! &  4 \!\!\! &  3 \!\!\! &  2 \!\!\! &  3 \!\!\! &4 \!\!\! &  3 \!\!\! &  2 \!\!\! & 1
 \end{array} \right).
\end{align*}


Model $3$: Visual path model.
$\k$-Dyck paths are two dimensional lattice paths from $(0,0)$ to $(|\k|+n,0)$ that never go below the horizontal axis
with up steps $($red arrows$)$ $(1,k_i)$, $1\leq i\leq n$ from left to right, and down steps $($blue arrows$)$ $(1,-1)$. It is clear that the ranks
are just the levels (or $y$-coordinates). The sweep map image $D$ of $\bD$ is obtained by reading its steps by their starting levels from bottom to top, and from right to left when at the same level.
This corresponds to sweeping the starting points of the steps from
bottom to top using a line of slope $\epsilon$ for sufficiently small $\epsilon >0$.
The visualization of the ranks in this model allows us to have better understanding of many results.
For instance for the path $\oD$ in Figure \ref{fig:model} this gives
\begin{figure}[!ht]
  $$
\hskip .4 in
  \oD = \hskip -.5 in \vcenter{ \includegraphics[height=1.4 in]{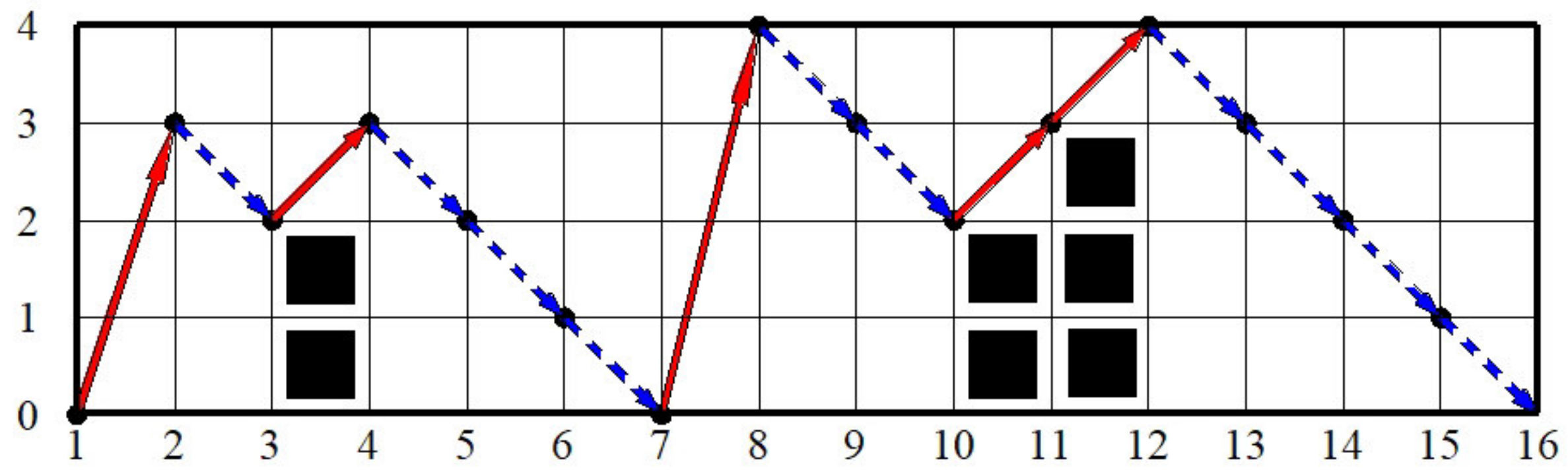}}
$$
\caption{An example of a $\vec{k}$-Dyck path in model 3. The sweep $\dinv$ is 13 and the red $\dinv$ is 3. Thus we have
$\dinv(\oD)=16.$
\label{fig:mode3}}
\end{figure}

The sweep map of a $\k$-Dyck path is usually a $\k'$-Dyck path where $\k'$ is obtained from $\k$ by permuting its entries. Denote by $\mathcal{K}$ the set of
all such $\k'$ and by $\CD_{\mathcal{K}}$ the union of $\CD_{\k'}$ for all such $\k'$. The sweep map is a bijection from $\CD_{\mathcal{K}}$ to itself.

A Dyck path $\bD\in D_{\k}$ may be encoded as $\bD=(a_1,a_2,\dots,a_{|\k|+n})$ with each entry either
$k_i$ or $-1$. The $SW$-word of the $\bD$ is $\SW(\bD)=\sigma_1\sigma_2\cdots\sigma_{|\k|+n}$ where $\sigma_j=S^{k_i}$ if $a_j=k_i$
and $\sigma_j=W$ if $a_j=-1$. The rank sequence $r(\bD)=(0=r_1,r_2,\dots,r_{|\k|+n})$ of $\bD$ is defined as the partial sums
$r_i=a_1+a_2+\cdots+a_{i-1}\geq 0$, called starting rank (or level) of the $i$-th step.
Geometrically, $r_i$ is just the level or $y$-coordinate of the starting point of the $i$-th step. We also need to consider the end rank sequence
$\r(\bD)=(\r_1,\r_2,\dots,\r_{|\k|+n})=(r_2,r_3,\dots,r_{|\k|+n},0)$.
When clear from the context, we usually write $S^{k_i}$ as $S$, and denote by $r(S)$ and $\r(S)$ its starting rank and end rank, respectively.
The length of $S$ is written as $\ell(S)=\r(S)-r(S)$.

We will frequently use two orders on the arrows $A$ and $B$ of a Dyck path $D$: i) $A<B$ (under the natural order) means that
 $A$ is to the left of $B$ in $D$; ii)
$A <^s B$ (under the sweep order) means that $r(A)<r(B)$ or $r(A)=r(B)$ and $B < A$.


The paper is organized as follows. In this introduction, we have introduced the basic concepts.
In Section \ref{s-define-main} we define the three statistics $\area$, $\dinv$ and $\bounce$ for $\k$-Dyck paths and state our main
result in Theorem \ref{dinv-area-bounce1}. The proof of the theorem is given in the next two sections:
Section \ref{s-proof-dinvtoarea} proves that $\dinv$ sweeps to $\area$, and
Section \ref{s-proof-areatobounce} proves that $\area$ sweeps to $\bounce$. Finally Section \ref{s-summary} gives a summary and a conjecture on $q,t$ symmetry.

\section{Area, Dinv, and Bounce for $\k$-Dyck paths\label{s-define-main}}
Throughout this section, $\k=(k_1,k_2,\dots,k_n)$ is a fix vector  of $n$ positive integers, unless specified otherwise.
We define the three statistics for $\k$-Dyck paths.
The $\area$ and $\bounce$ are defined using model 1, and the $\area$ and $\dinv$ are defined using model 3. The two $\area$ definition are easily seen to be equivalent.
We also consider the $q,t$ symmetry property.

\subsection{The Area statistic for $\k$-Dyck paths}
Traditionally, the $\area$ of a rational Dyck path is defined to be the number of complete lattice cells between the path and the main diagonal.

We define the $\area$ statistic of a
$\vec{k}$-Dyck path $D$ to be equal to the sum of the starting ranks of all north steps of $D$.
In model 1, this is the number of complete lattice cells between the path and the main diagonal, and in rows containing a south end of a north step;
In model 3, this is the number of complete lattice cells between the red arrows and the horizontal axis. For example, in Figures \ref{fig:model} and \ref{fig:mode3}, we have $\area(\oD)=7$.
Note that some of the complete lattice cells with crosses are not counted, because their rows do not contain a south end of a north step.

This definition is closely related to the $\dinv$ statistic in the next subsection.
It agrees with the $\area$ for ordinary Dyck paths.

\subsection{The dinv statistic for $\k$-Dyck paths}
Our $\dinv$ sweeps to $\area$ result is inspired by \cite[Proposition 4]{dinv-area} for $(m,n)$-Dyck paths. In that paper,
the authors gave a geometric description of the dinv statistic and a representation of the area by ranks.
Our $\area$ definition mimics that area formula. We follow some notations there.

By abuse of notation, we will use $W_i$ (resp. $S_j$) for the $i$-th blue (resp. $j$-th red) arrow for a $\k$-Dyck path $D$. Then we have
$$ \area (D) = \sum_{S_j} r(S_j),$$
where the sum ranges over all red arrows $S_j$ of $D$. Compare this formula with \cite[Theorem 2]{dinv-area} for $(m,n)$-Dyck paths.

The $\dinv$ statistic of a Dyck path $D\in D_{\k}$ needs a correction term which we call the red $\dinv$. More precisely, the $\dinv$ consists of two parts that can be described geometrically as follows.

\begin{enumerate}
  \item Sweep $\dinv$: Each pair $(W_i,S_j)$ with $W_i<S_j$ contributes $1$ if $W_i$ sweeps $S_j$, denoted $W_i\rightarrow S_j$, which means
$W_i$ intersects $S_j$ when we move it along a line of slope $\epsilon$
       (with $0 < \epsilon <\!\!<1$) to the right past $S_j$;
  \item Red $\dinv$: Each  pair $(S_i,S_j)$ of red arrows with $S_i < S_j$ contributes $\r(S_j)-\r(S_i)$ if
        $r(S_i)\geq r(S_j)$ and $\r(S_j) > \r(S_i)$, and contributes $\r(S_i)-\r(S_j)$ if $r(S_i) < r(S_j)$ and $\r(S_j) < \r(S_i)$.
In other words, each pair $(S_i,S_j)$ of red arrows contributes $|\r(S_j)-\r(S_i)|$ if one of the two arrows can be contained in the other
by moving them along a line of slope $\epsilon$.
\end{enumerate}

In formula we have
\begin{align*}
\dinv(D)= \sum_{W_i<S_j} \chi(W_i\rightarrow S_j) &+ \sum_{S_i<S_j} \chi(r(S_i)\ge r(S_j) \;\&\; \r(S_j)>\r(S_i)) (\r(S_j)-\r(S_i))     \\
  &+\sum_{S_i<S_j} \chi(r(S_i)< r(S_j) \;\&\; \r(S_j)<\r(S_i)) (\r(S_i)-\r(S_j)).    
\end{align*}

For example, in the Figure $\ref{fig:mode3}$, we have $\dinv(\oD)=16$.

\subsection{The bounce statistic for $\k$-Dyck paths}
The $\bounce$ statistic was defined by Haglund for ordinary Dyck paths and extended by Loehr for $k$-Dyck paths. We will extend Leohr's bounce path to that of $\k$-Dyck paths
with the help of an intermediate rank tableau $R^b(D)$, which will be proved to be the rank tableau $R(D)$ in \cite{Xin-Zhang}. The bounce paths for rational Dyck paths
are still unknown.

The bounce path is a sequence of alternating \emph{vertical~moves} and \emph{horizontal~moves} constructed with the help of
an intermediate rank tableau $R^b(D)$ consisting of $n$ columns with $k_i+1$ cells in the $i$-th column.
The entries in each column will be of the form $a,a+1,a+2,\dots$ from top to bottom, so to construct $R^b(D)$ it suffices to determine the top row entries.

We begin at $(0,0)$ with a vertical move, and eventually end at $(|\vec{k}|, |\vec{k}|)$ after a horizontal move.
Let $v_0, v_1,\cdots$ denote the number of passing north steps of the successive vertical moves and let $h_0, h_1, ...$ denote the number of passing east steps of the successive horizontal moves.
These numbers are calculated in the following algorithm.


\noindent
\textbf{Bouncing Algorithm}

\medskip
\noindent
Input: A $\k$-Dyck path $D\in D_{\vec{k}}$ in model $1$.

\noindent
Output: The bounce path of $D$, $\bounce(D)$, and the rank tableau $R^b(D)$.

\begin{enumerate}
\item  To find $v_0$, move due north from $P_0=(0,0)$ until you reach the west end $Q_0$ of an east step of the Dyck path $D$; the number of north steps traveled is $v_0$.
Write $v_0$ zeroes in turn in the first row in $R^0$, add one in the lower cells from top to bottom in each column to obtain $R^1$.
Let $h_0$ be the number of $1$'s in $R^1$ and move due east $h_0$ units to a position $P_1$.

%

\item Suppose in general we reached a position $P_i$ and need to find $v_i$.
Then we move north from $P_i$ until we reach the west end $Q_i$ of an east step of the
Dyck path. Define $v_i$ to be the number of north steps traveled.
Write $v_i$ (possibly equal to $0$) $i$'s in turn in the first row in $R^i$, add one in the lower cell from top to bottom in each column to obtain $R^{i+1}$.
Let $h_i$ be the number of $i+1$ in $R^{i+1}$ and move east $h_i$ units to a position $P_{i+1}$.

\item Proceed as above until we eventually end at $P_f=(|\vec{k}|, |\vec{k}|)$. The final tableau $R^f$ is our rank tableau $R^b(D)$,
and the $\bounce$ statistic is defined
to be $$\bounce(D)=\sum_{i\geq0}i\times v_i(D)$$
a weighted sum of the lengths of the vertical moves in the bounce path derived from $D$.
Equivalently, $\bounce(D)$ is also the sum of the entries in the first row of $R^b(D)$.
\end{enumerate}

\medskip
We illustrate the bounce statistic by the following Figure \ref{fig:bouncepath}, where the Dyck path $D$ is the sweep map image of
the path $\oD$ in Figure \ref{fig:model}. To obtain the bounce path $(\bounce(D))$ and the rank tableau $R^b(D)$, We first find $v_0=2$. Then we
construct the tableau $R^1$ with two columns. Thus $h_0=2$ and we reach the position $P_1=(2,7)$, as shown in the Figure.
Now we are blocked by the path, so $Q_1=P_1$, which means $v_1=0$, and hence $R^2 =R^1$.
It follows that $h_1=2$ and we reach the position $P_2=(4,7)$, as shown in the Figure. Continuing this way, it is easy to obtain
$R^3$, $R^4$, and the bounce path $(\bounce(D))$. The rank tableau $R^b(D) = R^4$.

\begin{figure}[!ht]
  $$
 \hskip -1.8in
 \vcenter{ \includegraphics[height= 2.06 in]{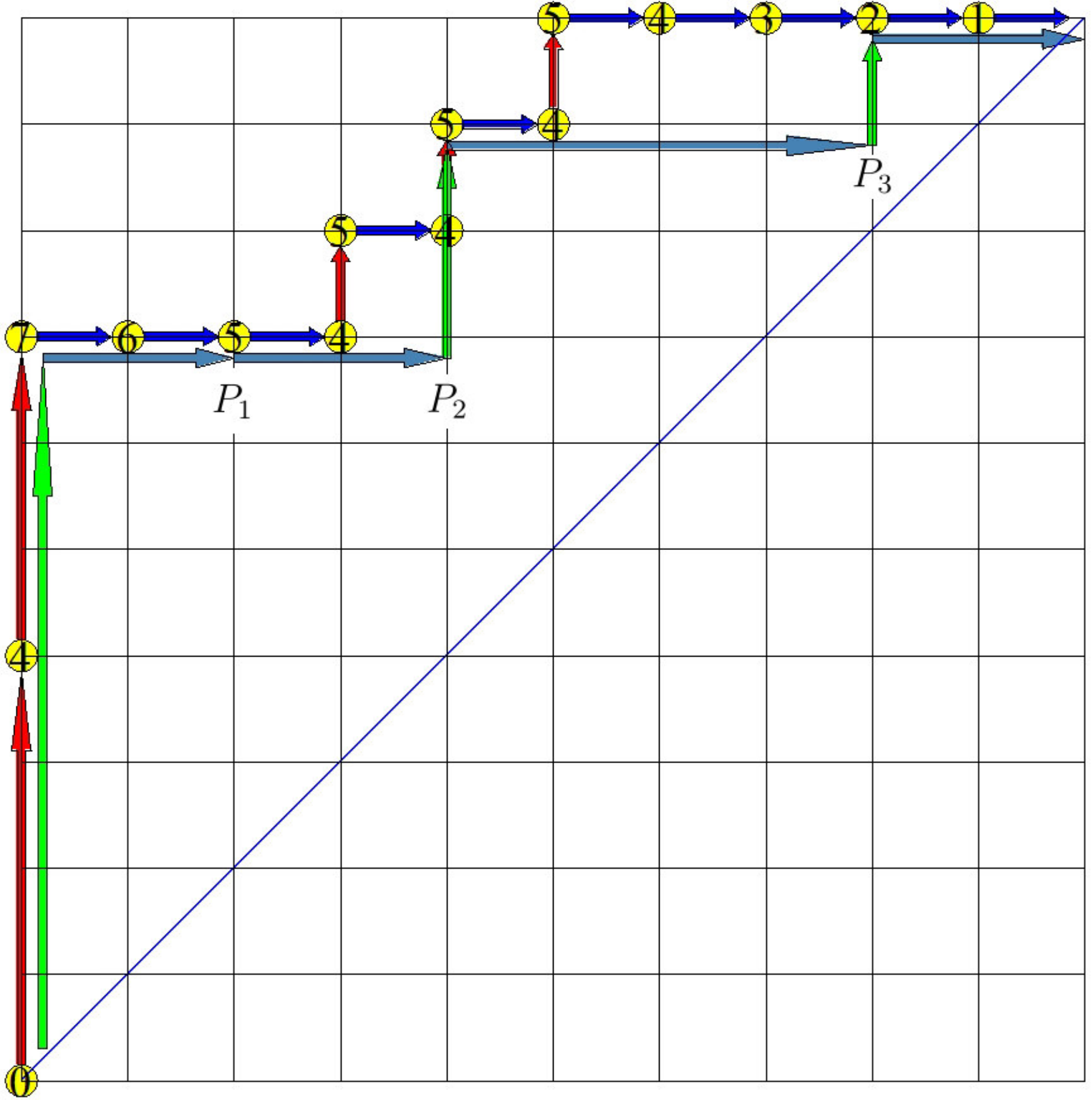}}
\hskip -3.0in  \vcenter{ \includegraphics[height=1.8 in]{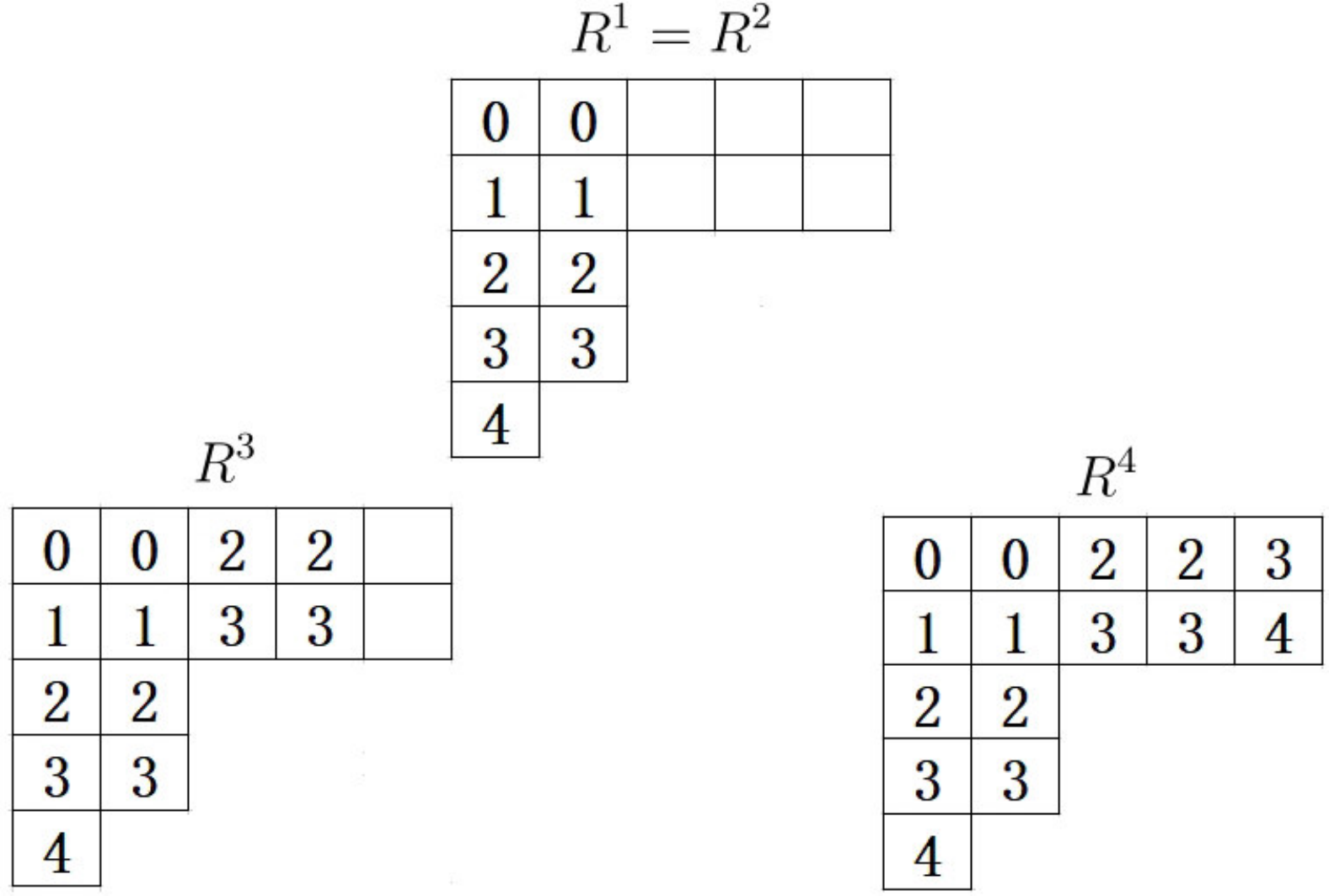}}
$$
\caption{An example of the bounce path and rank tableau for the Dyck path $D=\Phi(\oD)$, where $\oD$ is depicted in Figure \ref{fig:model}.
\label{fig:bouncepath}}
\end{figure}

Now we need to show that the bounce path is always well-defined.

Note that, for a Dyck path $D\in D_{\vec{k}}$, the bounce path does not necessarily return to the diagonal $x=y$ after
each horizontal move.
Consequently, it may occur that $P_i$ is the starting point of an east step of $D$, so $v_i=0$. We claim that
$h_i>0$ in this case for $i<f$. Then we move forward to $P_f$ without a stop.
Assume to the contrary that $h_i=0$. Then there are no $i+1$ in $R^i$, and hence no larger ranks also.
This means that we have moved $\sum_{j=1}^{v_0+v_1+\cdots+v_{i-1}}k_j$ east steps in total. Since the lengths of the north steps
is $$
\sum_{j=1}^{v_0}k_j+\sum_{j=v_0+1}^{v_0+v_1}k_j+\cdots+\sum_{j=v_0+\dots+v_{i-2}+1}^{v_0+\dots+v_{i-1}}k_j=
\sum_{j=1}^{v_0+\dots+v_{i-1}}k_j,$$
$P_i$ is on the diagonal line. But then the east step starting at $P_i$ will go below the diagonal line.
This contradicts the fact that $D$ is a Dyck path.

Our bounce path reduces to Loehr's bounce path for $k$-Dyck paths.

\begin{theo}\label{dinv-area-bounce1}
The sweep map takes $\dinv$ to $\area$ and $\area$ to $\bounce$ for $\k$-Dyck paths.
That is, for any Dyck path $\overline{D}\in  \CD_{\mathcal{K}}$ with sweep map image $D=\Phi(\overline{D})$,
we have $\dinv(\overline{D})=\area(D)$ and $\area(\oD)=\bounce(D)$.
\end{theo}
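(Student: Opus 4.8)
The statement has two halves, and I would handle them in the order the paper announces: first $\dinv(\oD)=\area(\Phi(\oD))$, then $\area(\oD)=\bounce(\Phi(\oD))$. For the first half, the plan is to track what happens to each contributing pair under the sweep map. Recall that $\area(D)=\sum_{S_j}r(S_j)$, so I must show that the starting level of each red arrow in $D=\Phi(\oD)$ is accounted for by exactly the sweep-$\dinv$ and red-$\dinv$ contributions in $\oD$. The sweep map reorders the arrows of $\oD$ by the sweep order $<^s$, so the starting level of a red arrow $S$ in $D$ equals (number of arrows $A$ of $\oD$ with $A <^s S$ whose net effect must be summed). I would split that count by the type of $A$: a blue arrow $W<^s S$ preceding $S$ in the sweep order contributes $-1$ to the level of $S$ unless it is "re-passed", and a red arrow contributes its length; the bookkeeping is exactly designed so that $W<S$ with $W\to S$ counts, and a pair of nested-by-slope-$\epsilon$ red arrows contributes $|\r(S_j)-\r(S_i)|$. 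Concretely I would argue that, for a fixed red arrow $S$ in $D$, its level is $\sum_{A<^s S}\ell(A)$ over arrows $A$ of $\oD$ immediately relevant, and then match this termwise with the three sums in the displayed formula for $\dinv$. This is precisely the style of argument in \cite{dinv-area} (their visual proof for $(m,n)$-Dyck paths), so the work is to check that the extra generality of variable step-lengths $k_i$ does not break the matching — in particular that the red $\dinv$ correction term is exactly the discrepancy caused by red arrows of length $>1$.

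For the second half, $\area(\oD)=\bounce(D)$, I would proceed via the rank tableau. The key claim to establish — flagged in the paper as "which will be proved to be the rank tableau $R(D)$ in \cite{Xin-Zhang}" — is that the intermediate tableau $R^b(D)$ produced by the Bouncing Algorithm coincides with the rank tableau $R(\oD)$ whose column entries record the ranks of $\oD$. Granting that identification, the sum of the first-row entries of $R^b(D)$, which by definition is $\bounce(D)$, becomes the sum of the top entries of the columns of $R(\oD)$; and since each column of $R(\oD)$ is an arithmetic progression $a,a+1,\dots,a+k_i$ starting at the starting rank of the $i$-th north step of $\oD$, the top entry of the $i$-th column is exactly $r(S_i)$ in $\oD$. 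Hence $\bounce(D)=\sum_i r(S_i)=\area(\oD)$ by the area formula. So the proof reduces to: (i) the Bouncing Algorithm terminates and produces a well-defined lattice path (already done in the excerpt, via the contradiction argument showing $h_i>0$ when $i<f$), and (ii) $R^b(D)=R(\oD)$.

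The main obstacle is step (ii): showing that the bounce construction on $D=\Phi(\oD)$ reconstructs the rank data of $\oD$. I would prove this by induction on the bounce steps, showing that after the $i$-th vertical/horizontal move, the entries written in $R^b$ are exactly the ranks of $\oD$ that lie in the "level interval" swept so far, using the defining property of the sweep map (steps of $D$ are the steps of $\oD$ sorted by starting level, ties broken right-to-left). The delicate point is the interplay between the east-step count $h_i$ (number of $(i+1)$'s in $R^{i+1}$) and the geometry of $D$: one must verify that moving east $h_i$ units in $D$ lands exactly at the west end of the next relevant east step, i.e. that the number of east steps of $D$ at a given height matches the number of cells of $R^b$ filled with the corresponding value. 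This is where the linear inverse-sweep algorithm of \cite{Xin-Zhang} does the heavy lifting: I would invoke it to say that the bounce path's vertical moves $v_0,v_1,\dots$ recover precisely the block decomposition of $\oD$ that the inverse sweep map uses, and then the tableau identity is immediate. Once $R^b(D)=R(\oD)$ is in hand, both halves of the theorem follow, and the reductions to ordinary and $k$-Dyck paths are checked by specializing $k_i=1$ and $k_i=k$, recovering Haglund's and Loehr's statistics respectively.
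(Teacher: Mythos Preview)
Your plan for the second half, $\area(\oD)=\bounce(D)$, is essentially the paper's: the paper proves that the bounce tableau $R^b(D)$ coincides with the rank tableau $R(D)$ produced by the Filling/Ranking Algorithms of \cite{Xin-Zhang}, by showing inductively on $i$ that $R(D)$ has exactly $v_i$ copies of $i$ in its top row. The top-row sum then gives $\bounce(D)$ on one hand and $\sum_j r(S_j)=\area(\oD)$ on the other, exactly as you outline.

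Your plan for the first half, however, differs from the paper and, as written, has a real gap. You propose to fix a red arrow $S$ in $D$, express its level via Proposition~\ref{prop-area} as a signed sum over $\bar A<^s \bar S$, and then ``match termwise'' with the three sums in the $\dinv$ formula. But the sweep-$\dinv$ condition involves the \emph{natural} order $W_i<S_j$ together with the geometric relation $W_i\to S_j$, whereas the level formula involves the \emph{sweep} order $<^s$; these are genuinely different indexings, and the red-$\dinv$ correction does not split off as a per-$S$ contribution in any obvious way. The sentence ``the bookkeeping is exactly designed so that $W<S$ with $W\to S$ counts'' is the assertion to be proved, not an argument. The paper does \emph{not} carry out such a direct matching. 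It argues instead by induction on $\area(\oD)$: the base case $\area(\oD)=0$ is computed by hand, and for the step one swaps a specific adjacent pair $\S\W\mapsto \W'\S'$ (with $\S$ the rightmost red arrow of maximal rank), dropping the area by one, and then proves via two detailed propositions (Propositions~\ref{rec-area} and~\ref{rec-dinv}) that $\area(D)-\area(D')$ and $\dinv(\oD)-\dinv(\oD')$ both equal the same explicit quantity
\[
\sum_{S\in S^\L_{\{i-1\}}}\ell(S)\; -\; k\,|S^\L_{\{i-1\}}|\; +\; |S^\L_{\{i\}}|\; -\; c^b(B_1)\; -\; c^b(B_2).
\]
The case analysis leans on the zero-row-count property (Proposition~\ref{prop--row-zero}) and is precisely where the variable step-lengths $k_i$ force the red-$\dinv$ correction to enter nontrivially. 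If you want a direct, non-inductive proof you would have to supply that global matching identity in full; as your proposal stands, it is the inductive route that actually closes.
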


\subsection{About the $q,t$-symmetry}
A vector $\k=(k_1,\dots, k_n)$ of positive integers is also called an ordered partition. Arranging its entries decreasingly gives a partition, called the partition
$\lambda(\k)$ of $\k$. 
We can define $q,t$-Catalan numbers of type $\lambda$ similarly as follows:
\begin{align*}
  C_{\lambda}(q,t): = \sum_{\lambda(\k)=\lambda}\sum_{ D \in \CD_{\k}} q^{\dinv(D)} t^{\area(D)}=\sum_{\lambda(\k)=\lambda}\sum_{ D \in \CD_{\k}} q^{\area(D)} t^{\bounce(D)},
\end{align*}
where the sum ranges over all $\k$-Dyck paths satisfying $\lambda(\k)=\lambda$. Clearly, $C_{k^n}(q,t)$ agrees with Loehr's higher $q,t$-Catalan polynomials, where $k^n$ denotes
the partition consisting of $n$ equal parts $k$.

We investigate the $q,t$ symmetry of $C_\lambda(q,t)$, and report as follows.

We do have the $q,t$ symmetry for partitions $\lambda$ of length $n=2$.
We can prove this property easily as follows.
For $\k=(k_1,k_2)$, Dyck paths $D$ are uniquely determined by the two ranks
$(r_1=0,r_2)$ of the two red arrows. Let us call them the red ranks.
The path $D$ starts with a red arrow $S^{k_1}$ followed by $k_1-r_2$ blue arrows $W$, then a red arrow $S^{k_2}$ followed
by $k_2+r_2$ blue arrows $W$.

It is easily checked that $\Phi^{-1}(D)$ has red ranks $(r_1=0, k_1-r_2)$ for $0\leq r_2 \leq k_1$,
but when $r_2=k_1$, $\Phi^{-1}(D)$ starts with $S^{k_2}$ instead of $S^{k_1}$. 
It follows that the contribution of $D$ in $C_{\lambda}(q,t)$ is $q^{r_2} t^{k_1-r_2}$
by using the bounce formula.
Thus we can define the map
$\pi: D\mapsto \pi(D) \in \CD_{\k}$, where $\pi(D)$ is determined by its two red ranks $(0,k_1-r_2)$.
The map $\pi$ shows the $q,t$ symmetry of
$C_{\lambda}(q,t)$.

\medskip
For partitions $\lambda$ of length $n=3$,
computer experiment suggests that $C_{\lambda}(q,t)$ is symmetric in $q,t$. We obtain
explicit bounce formula as follows. The dinv formula does not seem nice.

For $\k=(k_1,k_2,k_3)$, Dyck paths $D\in \CD_{\k}$ are uniquely determined by their red ranks $(r_1=0,r_2,r_3)$. We have
$$\bounce(D)=\left\{
               \begin{array}{ll}
                 2(k_1-r_2)+r_2+k_2-r_3-\min(r_2,k_2), & \text{ if } r_2+k_2-r_3 \geq 2\min(r_2,k_2) ; \\
                 2(k_1-r_2)+ \lceil {\frac{r_2+k_2-r_3} 2}\rceil, & \text{otherwise.}
               \end{array}
             \right.
$$
We have verified the $q,t$ symmetry for almost all cases for which $C_{\lambda}(1,1)<3\times 10^4$.

A combinatorial proof seems out of reach at this moment. We believe that it is very hopeful to prove the $q,t$ symmetry property in this case by MacMahon's partition analysis technique.

For partition $\lambda$ of length $n\ge 4$, the $q,t$ symmetry no longer holds.
The smallest case that violates the $q,t$ symmetry is when $\lambda=(3,1,1,1)$. In this case, we have
\begin{multline*}
  C_{\lambda}(q,t)-C_{\lambda}(t,q)={q}^{6}{t}^{3}-{q}^{3}{t}^{6}-{q}^{6}{t}^{2}-2\,{q}^{5}{t}^{3}\\
+2\,{q}^
{3}{t}^{5}+{q}^{2}{t}^{6}+2\,{q}^{5}{t}^{2}+{q}^{4}{t}^{3}-{q}^{3}{t}^
{4}-2\,{q}^{2}{t}^{5}-{q}^{4}{t}^{2}+{q}^{2}{t}^{4}.
\end{multline*}

Another example is when $\lambda=(3,2,2,1)$. We have
\begin{multline*}
  C_{\lambda}(q,t)-C_{\lambda}(t,q)
={q}^{9}{t}^{3}-{q}^{3}{t}^{9}-{q}^{9}{t}^{2}-{q}^{8}{t}^{3}+{q}^{6}{t}^{5}-{q}^{5}{t}^{6}+{q}^{3}{t}^{8}+{q}^{2}{t}^{9}+
{q}^{8}{t}^{2}-{q}^{7}{t}^{3}\\
-{q}^{6}{t}^{4}+{q}^{4}{t}^{6}+{q}^{3}{t}^{7}-{q}^{2}{t}^{8}+
{q}^{7}{t}^{2}+{q}^{6}{t}^{3}+{q}^{5}{t}^{4}-{q}^{4}{t}^{5}-{q}^{3}{t}
^{6}-{q}^{2}{t}^{7}-{q}^{6}{t}^{2}+{q}^{2}{t}^{6}.
\end{multline*}
%
%
%
For most partitions $\lambda$ of length $n\ge 4$, $C_\lambda(q,t)$ are not $q,t$ symmetric, but we do have a conjecture
stated in Section \ref{s-summary}.

\section{proof that dinv sweeps to area\label{s-proof-dinvtoarea}}


\begin{prop} [in \cite{dinv-area}] \label{prop-area}
The starting rank of any arrow $A$ of $D$ may be simply obtained by drawing a line of slope $0<\epsilon\ll 1$ at the starting point of its preimage $\bar{A}$, then counting the lengths of the red arrows starting below the line and minus the number of the blue arrows that start below the line. In formula, we have
\begin{align*}
  r(A) = \sum_{\bar{S}<^s \bar{A}} \ell(\bar{S}) - \# \{ \bar{W} : \bar{W}<^s \bar{A}\}.
\end{align*}
\end{prop}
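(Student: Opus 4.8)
\emph{Proof strategy.} The plan is to unwind the definition of the sweep map in model $3$ and read the formula off directly; since the statement is essentially a bookkeeping identity (as reflected by the attribution to \cite{dinv-area}), no extra combinatorial input is needed.

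First I would record the correspondence induced by $\Phi$. Writing $D=\Phi(\bar D)$ with $\bar D=\Phi^{-1}(D)$, the path $D$ is obtained by listing the steps of $\bar D$ according to their starting levels from bottom to top, breaking ties from right to left; equivalently, the arrows of $D$ read from left to right are exactly the arrows of $\bar D$ listed in increasing sweep order $<^s$. Hence, if $A$ is an arrow of $D$ with preimage $\bar A$ in $\bar D$, the arrows of $D$ strictly to the left of $A$ are precisely the images of the arrows $\bar B$ of $\bar D$ with $\bar B<^s\bar A$.

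Next I would match the geometric picture to this order. Let $L$ be the line of slope $\epsilon$, $0<\epsilon\ll 1$, through the starting point of $\bar A$ in model $3$. For an arrow $\bar B\neq\bar A$ one checks three cases: if $r(\bar B)<r(\bar A)$, the starting point of $\bar B$ lies strictly below $L$ (choose $\epsilon<1/(|\k|+n)$); if $r(\bar B)>r(\bar A)$, it lies above $L$; and if $r(\bar B)=r(\bar A)$, then since $L$ has positive slope the starting point of $\bar B$ lies below $L$ exactly when $\bar B$ is to the right of $\bar A$, i.e. $\bar A<\bar B$. Comparing with the definition of $<^s$, this says precisely that $\bar B$ starts below $L$ if and only if $\bar B<^s\bar A$ (while $\bar A$ itself starts \emph{on}, not below, the line). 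So the red (resp.\ blue) arrows of $\bar D$ starting below $L$ are exactly the $\bar S$ (resp.\ $\bar W$) with $\bar S<^s\bar A$ (resp.\ $\bar W<^s\bar A$).

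Finally I would evaluate $r(A)$ as a partial sum. By definition $r(A)$ is the sum of the vertical increments of the steps of $D$ lying to the left of $A$; and $\Phi$ carries each red up-step $(1,k_i)$ of $\bar D$ to a red up-step $(1,k_i)$ of $D$ and each blue down-step $(1,-1)$ to a blue down-step, so the step of $D$ coming from $\bar B$ contributes exactly $\ell(\bar B)=\r(\bar B)-r(\bar B)$, which is $k_i$ when $\bar B$ is red and $-1$ when $\bar B$ is blue. Combining the three steps,
$$ r(A)=\sum_{\bar B<^s\bar A}\ell(\bar B)=\sum_{\bar S<^s\bar A}\ell(\bar S)-\#\{\bar W:\bar W<^s\bar A\}, $$
which is the claimed identity. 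The only point requiring care is the tie-breaking in the second step: the ``right-to-left at equal level'' rule in the definition of $\Phi$ is exactly what makes the slope-$\epsilon$ line single out $<^s$ rather than its reverse among equal-rank arrows, and one must remember to exclude $\bar A$ itself since its starting point lies on the sweep line. Everything else is a direct translation of definitions, so I do not expect a genuine obstacle here.
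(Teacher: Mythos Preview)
Your argument is correct. The paper does not actually supply a proof of this proposition: it is stated with the attribution ``[in \cite{dinv-area}]'' and then used as a black box in the proof of Proposition~\ref{rec-area}. What you have written is precisely the natural verification of the identity, and it matches the argument in the original source \cite{dinv-area}: the sweep map rearranges the steps of $\bar D$ into $D$ according to the order $<^s$, so the steps preceding $A$ in $D$ are exactly the $\bar B$ with $\bar B<^s\bar A$, and $r(A)$ is by definition the partial sum of their increments. Your care with the tie-breaking rule and with excluding $\bar A$ itself is appropriate; there is nothing to add.
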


\begin{prop}[Zero-row-count property in \cite{dinv-area}]\label{prop--row-zero}
In model $3$, each lattice cell may contain a segment of a red arrow or a segment of a blue arrow or no segment at all. The red segment count of row $j$ will be denoted $c^r(j)$ and the blue segment count is denoted $c^b(j)$. We will denote by $c(j)=c^r(j)-c^b(j)$ and refer to it the $j$-th row count. Observe that in every row of a path diagram , the red segments and blue segments have to alternate. Every row must start with a red segment and end with a blue segment, and hence $c(j)=0$ holds for all $j$.
\end{prop}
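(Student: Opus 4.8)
The plan is to fix an arbitrary row $j\ge 1$ (the strip $j-1<y<j$) and to prove the sharper claim that, reading the segments lying in that row from left to right, their colors alternate and the sequence begins with a red segment and ends with a blue one. The equality $c^r(j)=c^b(j)$, i.e.\ $c(j)=0$, is then immediate, since an alternating red/blue sequence that starts red and ends blue has the same number of symbols of each color. The cleanest way to make ``alternate'', ``start red'' and ``end blue'' precise is to reinterpret the segments in row $j$ as transverse crossings of a single auxiliary horizontal line.

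First I would introduce the midline $L_j\colon y=j-\tfrac12$ of row $j$. Since the path in model $3$ is the graph of a continuous piecewise-linear function that both starts (at $(0,0)$) and ends (at $(|\k|+n,0)$) at height $0<j-\tfrac12$, it lies below $L_j$ at both endpoints. I would list the crossings of the path with $L_j$ ordered by $x$-coordinate: each is either an up-crossing (from below $L_j$ to above) or a down-crossing. Because the starting height is below $L_j$, continuity forces the crossings to alternate up, down, up, down, $\dots$; the first crossing must be an up-crossing, and since the path returns below $L_j$ at the end, the last crossing must be a down-crossing.

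Next I would match crossings with segments. Every step spans an $x$-interval of length $1$ and is strictly monotone in both coordinates, so each step meets $L_j$ in at most one point; an up-crossing therefore occurs in the interior of a red arrow and a down-crossing in the interior of a blue arrow, and the crossing point lies on the unique segment that arrow contributes to row $j$. The converse, that every segment in row $j$ carries a crossing, is the one point needing a small computation and is, I expect, the only real obstacle: a red arrow of length $k_i$ starting at integer level $r$ meets the open strip $(j-1,j)$ exactly when $r\le j-1$ and $r+k_i\ge j$, which is precisely its condition to cross $L_j$; the analogous computation shows a blue arrow contributes a segment to row $j$ iff it starts at level $j$, again exactly its crossing condition. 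The arithmetic fact making the two conditions coincide is that all ranks are integers while $L_j$ sits at the half-integer height $j-\tfrac12$, so the path never touches $L_j$ at a vertex and no grazing contact can occur.

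With this bijection, ordering crossings by $x$-coordinate coincides with ordering the segments of row $j$ from left to right, and the color of a segment matches the direction of its crossing. The alternation and boundary behaviour established for crossings then transfer verbatim to the segments: they alternate in color, the row begins with a red segment and ends with a blue one, and hence $c^r(j)=c^b(j)$, so $c(j)=0$ for every row $j$. I expect the verification of the segment-crossing correspondence, via the integer-versus-half-integer argument, to be the only step requiring care; the alternation and the final count are then formal.
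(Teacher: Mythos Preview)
Your argument is correct and follows the same idea the paper uses: the proposition is stated as a self-contained observation (segments in each row alternate, start red, end blue), and you have simply made that observation rigorous via the half-integer midline $y=j-\tfrac12$ and the integer-rank/half-integer-height dichotomy. There is nothing to add; your crossing argument is exactly the natural way to formalize the paper's one-sentence justification.
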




\begin{proof}[Proof of Theorem \ref{dinv-area-bounce1} part 1]
 We follow the idea in \cite{dinv-area}. We will prove $\dinv(\bD)=\area(D)$ by induction on $\area(\bD)$.


The base case is when $\area(\bD)=0$. Such a $\k$-Dyck path $\bD$ is uniquely given by:
a red arrow $S^{k_1}$ followed by $k_1$ blue arrows $W$, then a red arrow $S^{k_2}$ followed by $k_2$ blue arrows $W$, and so on.
The sweep map image $D$ of $\overline{D}$ is clearly given by: red arrows $S^{k_{n}},S^{k_{n-1}},...,S^{k_{1}}$ followed by $|\k|$ blue arrows $W$.

To show $\dinv(\bD) = \area(D)$ in this case, we compute as follows. The $\area$ of $D$ is simply given by
\begin{align*}
\area(D) = (n-1)k_n+(n-2)k_{n-1}+\cdots+k_2.
\end{align*}

The dinv formula in this case simplifies as follows.
\begin{align*}
\dinv(\bD)&=\sum_{j\ge 2}    \sum_{W_i<S_j} \chi(W_i\rightarrow S_j) + \sum_{S_i < S_j} \chi( k_j > k_i)(k_j - k_i). 
\end{align*}
For each $S_j=S^{k_j}$,
we group the arrow $S^{k_t}$ together with the followed $k_t$ blue arrows $W$, and compute their contribution in the above dinv formula.
For each $t<j$, we have two cases:

Case 1 when $k_t \geq k_j$: only the final $k_j$ blue arrows sweep $S^{k_j}$, contributing $k_j$ sweep dinvs; the red $\dinv$ is clearly $0$.
Thus the total contribution in this case is $k_j$.

Case 2 when $k_t < k_j$: the $k_t$ blue arrows sweep $S^{k_j}$, contributing $k_t$ sweep dinvs; the red $\dinv$ is clearly $k_j-k_t$. Thus the total contribution
in this case is still $k_j$.

It follows that
$$\dinv(\bD)= (n-1)k_n+(n-2)k_{n-1}+\cdots+k_2 = \area(D).$$

Now assume $\area(\bD)>0$. Then we choose the rightmost red arrow $\S$ with the largest rank among the red arrows.
Then $\S$ must be followed by a blue arrow, denoted $\W$. By switching the two arrows $\S$ and $\W$ in $\bD$
(denoting them by $\W'\S'$), we subtract one cell from $\bD$ and obtain another $\k$-Dyck path $\bD'$.
Clearly $\area(\bD')$ is one less than $\area(\bD)$. Let $D'$ be the sweep map image of $\bD'$. Then by the induction hypothesis,
$\dinv(\bD')=\area(D')$. We will show in Lemma \ref{lem-induction} that
$\dinv(\bD)-\dinv(\bD')=\area(D)-\area(D')$. The theorem is then proved.
\end{proof}

\begin{lem}\label{lem-induction}
We subtract one cell by switching the two arrows $\S$ and $\W$ in $\bD$ and obtain another $\k$-Dyck path $\bD'$. Let $D'$ be the sweep map image of $\bD'$.
We have the equation
$$\dinv(\bD)-\dinv(\bD')=\area(D)-\area(D').$$
\end{lem}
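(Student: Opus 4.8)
The plan is to track the change in every term of the $\dinv$ formula when we perform the single switch $\S\W \mapsto \W'\S'$, and compare it with the corresponding change in $\area(D)$. Let me set up notation: write $\oD = \cdots\S\W\cdots$ and $\oD' = \cdots\W'\S'\cdots$, so that $\S$ and $\W$ occupy the same two consecutive positions as $\W'$ and $\S'$. Since $\S$ has the \emph{largest rank among red arrows} and is the \emph{rightmost} such, the starting rank of $\W$ equals $\r(\S)$, and after the switch $\S'$ starts one level lower: $r(\S') = r(\S) - 1$, while $\r(\S') = \r(\S)$ stays the same (the length of the red arrow is unchanged), and $\W'$ now starts at $r(\S)$. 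All other arrows have unchanged ranks. First I would record these elementary rank changes precisely, since everything downstream is bookkeeping against them.

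Next I would split $\dinv(\oD) - \dinv(\oD')$ into three contributions according to the three sums in the $\dinv$ formula. (i) The sweep-$\dinv$ change: the only pairs whose sweep relation can change are those involving $\S/\S'$ or $\W/\W'$; I would argue that the pair $(\W,\S)$ itself and the pairs of $\S$ (resp. $\W$) against arrows lying strictly between their old and new sweep positions are the only ones affected, and count the net change — I expect it to come out to a clean expression in terms of how many arrows sit at the critical level. (ii) The red-$\dinv$ change: for each other red arrow $S_i$, the term $|\,\r(S_i) - \r(S)\,|$ (times the indicator) may switch on or off because $r(\S)$ dropped by one; since $\r(\S)$ is unchanged and $\S$ had the maximal end rank... actually one must be careful here, "largest rank" should be read as largest \emph{starting} rank, so I would re-examine which comparison flips. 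The key point is that only red arrows $S_i$ with $r(S_i)$ exactly equal to $r(\S)$ or $r(\S')$ can have their indicator change, and the magnitude changes by at most the relevant rank gap. (iii) On the $\area$ side, by the formula $\area(D) = \sum_{S_j} r(S_j)$ and Proposition \ref{prop-area}, the change $\area(D) - \area(D')$ is the sum over red arrows $S_j$ of the change in $r(S_j)$, and each such change is governed by how the sweep order $<^s$ on preimages is altered by the switch — which is controlled by exactly the same level-counting data as in (i) and (ii).

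The main obstacle, I expect, is not any single one of these computations but the matching: showing that the somewhat intricate alternating-sign red-$\dinv$ correction terms reorganize to exactly cancel the discrepancy between the raw sweep-$\dinv$ change and the $\area$ change. The clean tool for this should be Proposition \ref{prop--row-zero} (the zero-row-count property): the relation $c(j) = c^r(j) - c^b(j) = 0$ for every row $j$ is precisely what converts "number of blue arrows crossing a given level" into "total red length crossing that level," which is the bridge between the sweep-$\dinv$ count and the rank formula for $\area$. So the crux of the argument will be to apply Proposition \ref{prop--row-zero} at the rows near level $r(\S)$ to rewrite the blue-sweep count in (i) in terms of red lengths, and then check term by term against (ii) and (iii). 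I would organize the final write-up as: (a) rank changes; (b) $\area(D)-\area(D')$ via Proposition \ref{prop-area}; (c) sweep-$\dinv$ change; (d) red-$\dinv$ change; (e) combine using Proposition \ref{prop--row-zero}. Once (a)--(d) are in hand, step (e) should be a short computation.
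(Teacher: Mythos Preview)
Your plan is essentially the paper's own argument: the lemma is reduced to two propositions (Propositions \ref{rec-area} and \ref{rec-dinv}) that evaluate $\area(D)-\area(D')$ and $\dinv(\bD)-\dinv(\bD')$ to the same explicit expression, via exactly your steps (b), (c)+(d), and (e), with Proposition \ref{prop--row-zero} invoked at the end to convert blue counts to red counts. One correction to your setup: after the switch you have $r(\S')=r(\S)-1$ \emph{and} $\r(\S')=\r(\S)-1$ (both endpoints of the red arrow drop by one, since $\W'$ now precedes it), not $\r(\S')=\r(\S)$; this matters in the red-$\dinv$ bookkeeping of step (d), where the case analysis compares $\r(S)$ against $\r(\S)=i+k$ versus $\r(\S')=i+k-1$.
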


To prove Lemma \ref{lem-induction}, it suffices to prove the following Propositions \ref{rec-area} and \ref{rec-dinv}.
The former computes the difference $\area(D)-\area(D')$ and the latter computes the difference $\dinv(\bD)-\dinv(\bD')$.

Let $\bD$ and $\bD'$ be as above. In what follows in this section, we shall also suppose $r(\S)=i$ and $\ell(\S)=k$
and use $\S,\W,\S', \W'$ described below unless specified otherwise.
We have depicted in the left picture of Figure \ref{fig:the-cell} the cell that we have subtracted from the preimage $\overline{D}$
to obtain $\overline{D}'$. Replacing the $\S\W$ in $\oD$ by the dashed arrows $\W'\S'$ gives $\oD'$. Clearly $\ell(\S)=\ell(\d)=k$.
Let $D$ and $D'$ be the sweep map images of $\overline{D}$ and $\overline{D}'$.


\begin{figure}[!ht]
  $$
 \hskip -1.5in
 \vcenter{ \includegraphics[height= 2.36 in]{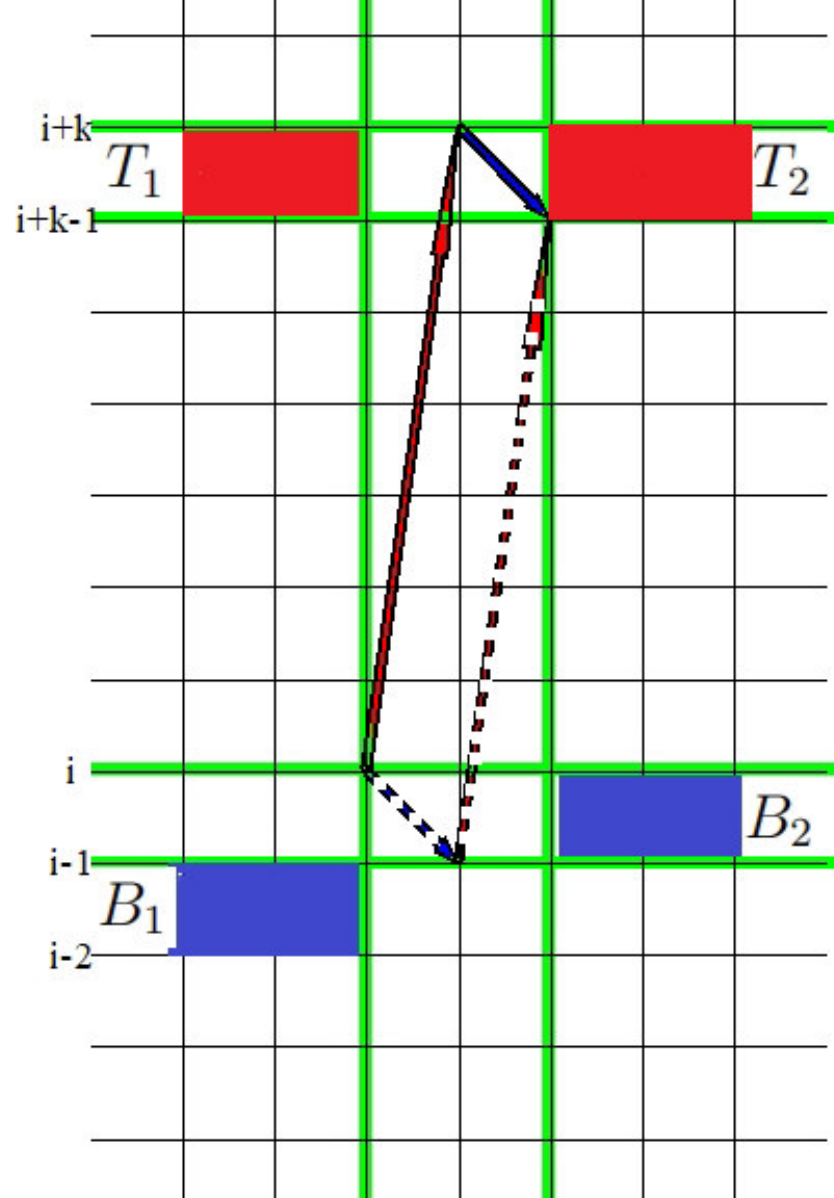}}
\hskip -3.3in  \vcenter{ \includegraphics[height=2.36 in]{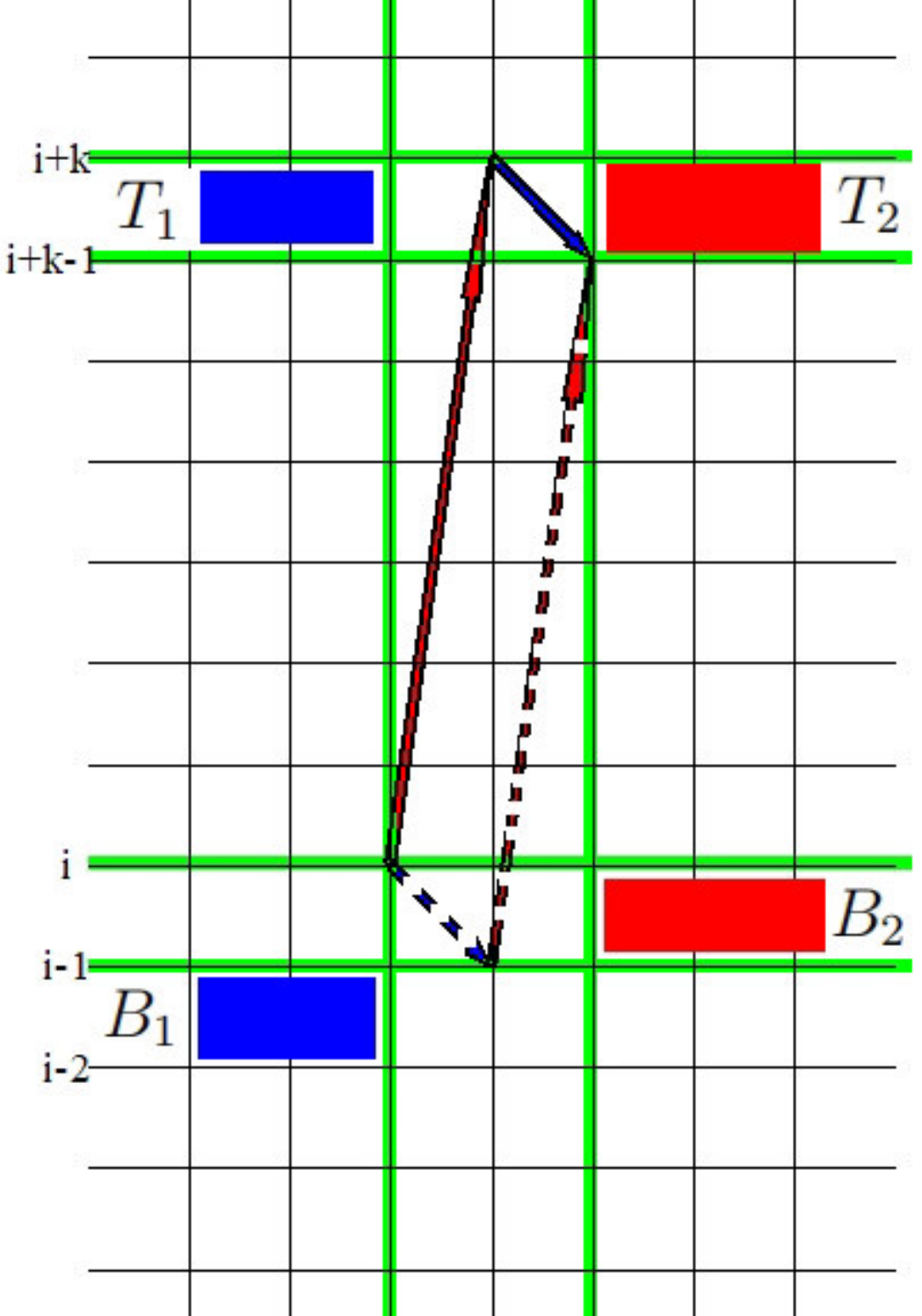}}
$$
\caption{ Contribution for the area difference and dinv difference.
\label{fig:the-cell}}
\end{figure}

It is convenient to use the following notations.
For any set $\U \subseteq \mathbb{N}$ and $a \in \mathbb{N}$,
we denote by $S^\L_{\U}=\{S|S< \S, r(S)\in \U \}=\{S|S< \S', r(S)\in \U \}$, where the first set is in $\bD$ and the second set is in $\bD'$.
The equality is clear and we will not distinguish whether the set is for $\bD$ or $\bD'$.

We list some similar notations as follows.
$$S^\L_{\U}=\{S|S< \S, r(S)\in \U \}   ,\quad S_{\U}^{\L, a}=\{S|S\in S^\L_{\U}, \r(S)\geq a\},$$
$$S^\R_{\U}=\{S|\S < S, r(S)\in \U\}, \quad  S^{\R,a}_{\U}=\{S|S\in S^\R_{\U}, \r(S)\geq a\},$$

$W^\L_{\U} = \{W|W < \S, r(W) \in \U \}$, $W^\R_{\U} = \{W|\S < W, r(W) \in \U \}$.
The notations $S_\U^\L,S_\U^{\L,a},S_\U^\R$ and $S_\U^{\R,a}$ are same with $S< \d$ or $\d < S$ in $\bD'$.

Then the following two properties about set  $S_\U^\L,S_\U^{\L,a}$.\\
Let $\U = \U_1 \bigcup \U_2$,   $\U_1\bigcap \U_2 = \emptyset $, $\U_1, \U_2 \subseteq \mathbb{N}$.

$1.$ $S_\U^\L = S_{\U_1}^\L \bigcup S_{\U_2}^\L.$

$2.$ $|S_\U^\L/S_\U^{\L,a}|=|S_\U^\L|- |S_\U^{\L,a}|.$

Now our task is to determine the difference
$$\area(D)-\area(D') = \sum_{ S\in \bD} r(S)  -\sum_{S\in \bD'} r(S)
$$
using $\bD$ and $\bD'$ by means of Proposition \ref{prop-area}.

\begin{prop}\label{rec-area}
Let $\overline{D'}$ be obtained from $\overline{D}$ by removing one area cell as above, and let $D$ and $D'$ be their sweep map images. Then
\begin{align}
\area(D)-\area(D')& = \sum_{S\in S^\L_{\{i-1\}}}\ell(S) - k \times |S_{\{i-1\}}^\L|+|S^\L_{\{i\}}|  - c^b(B_1)-c^b(B_2). \nonumber
\end{align}
\end{prop}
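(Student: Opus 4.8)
The plan is to compute $\area(D)-\area(D')=\sum_{S}r_D(S)-\sum_{S}r_{D'}(S)$ by applying Proposition \ref{prop-area} to each red arrow $S$ in $\bD$ and the corresponding red arrow in $\bD'$, and tracking only the terms that change when we switch $\S\W$ to $\W'\S'$. Recall that under Proposition \ref{prop-area}, $r_D(S)=\sum_{\bar T<^s S}\ell(\bar T)-\#\{\bar W<^s S\}$, where $<^s$ is the sweep order determined by starting ranks (with ties broken by reversed natural order). Switching $\S$ and $\W$ changes the starting rank of $\S$ (it becomes $i-1$ in $\bD'$, call it $\S'$) but leaves the starting ranks of all other arrows unchanged; it also changes the relative position (natural order) of $\S$ and $\W$ only. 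So the sweep order of $\bD'$ differs from that of $\bD$ in a very controlled way: $\S'$ drops from rank $i$ to rank $i-1$, and $\W'$ keeps rank $i-1$ but now sits to the left of $\S'$. The strategy is to split the red arrows $S$ into classes according to their starting rank and position relative to $\S$, and for each class compare the Proposition \ref{prop-area} count in $\bD$ versus $\bD'$.

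The key steps I would carry out, in order:
\begin{enumerate}
\item First determine the change in the contribution of $\S$ itself: since $r_D(\S)-r_{D'}(\S')$ is governed by which arrows precede $\S$ versus $\S'$ in sweep order, and the only arrows whose $<^s$-relation to $\S$ changes are those with starting rank exactly $i-1$ (which were $>^s\S$ in $\bD$ and become related to $\S'$ at the same rank in $\bD'$) together with $\W$ itself, this produces $\sum_{S\in S^\L_{\{i-1\}}}\ell(S)$ from the newly-preceding red arrows to the left, minus $k\times|S^\L_{\{i-1\}}|$ from the symmetric effect on the arrows sitting at rank $i-1$ whose own count changes because $\S'$ (of length $k$) crosses below them, and so on; I would organize this bookkeeping by the sets $S^\L_{\{i-1\}}$, $S^\L_{\{i\}}$ defined just before the proposition.
\item Next, account for arrows $S$ with starting rank $i$ that lie to the left of $\S$: for these, $\S$ (length $k$) was counted in $r_D(S)$ but $\S'$ now has rank $i-1<i$, so whether $\S'$ is still counted changes exactly for the members of $S^\L_{\{i\}}$, contributing $+|S^\L_{\{i\}}|$ after combining with the length bookkeeping.
\item Then handle the two blue-segment correction terms $c^b(B_1)$ and $c^b(B_2)$: the rows $B_1$ and $B_2$ indicated in the left picture of Figure \ref{fig:the-cell} are the rows bordering the removed cell, and the change of one blue arrow's position alters the blue-segment counts there; using the Zero-row-count property (Proposition \ref{prop--row-zero}) these corrections can be written purely in terms of $c^b(B_1)$ and $c^b(B_2)$.
\item Finally, verify that all other red arrows — those with starting rank $\notin\{i-1,i\}$, or lying to the right of $\S$, or blue arrows not adjacent to the modified region — have unchanged Proposition \ref{prop-area} counts, so they contribute nothing to the difference. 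Summing the surviving terms yields the claimed formula.
\end{enumerate}

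The main obstacle I expect is step 1 together with step 3: correctly identifying \emph{exactly} which arrows change their $<^s$-relationship when $\S$ drops one rank and crosses $\W$, and then not double-counting the length contributions against the blue-arrow-count contributions. In particular, the tie-breaking rule in the sweep order (at equal rank, later-in-$D$ comes first) means one must be careful about whether a red arrow of rank $i-1$ sitting to the left of $\S$ is swept before or after $\S'$; getting this wrong flips signs. The blue-segment terms $c^b(B_1)-c^b(B_2)$ are the residue of this careful accounting at the two rows adjacent to the deleted cell, and reconciling the "global" rank recomputation with the "local" picture of the single removed cell is where the real work lies. The algebraic identities (1) and (2) listed for the sets $S^\L_\U$ are the tools that let one reorganize the sum cleanly once the case analysis is set up.
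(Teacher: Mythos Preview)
Your overall strategy matches the paper's exactly: apply Proposition~\ref{prop-area}, and split the area difference into (a) the change $r_D(\S)-r_{D'}(\S')$ coming from $\S$ itself, and (b) the change $r_D(S)-r_{D'}(S)$ summed over the remaining red arrows~$S$. The paper's clean organization is that (b) yields $-k\,|S^\L_{\{i-1\}}|+|S^\L_{\{i\}}|$ and (a) yields $\sum_{S\in S^\L_{\{i-1\}}}\ell(S)-c^b(B_1)-c^b(B_2)$. However, your attributions of the individual terms are wrong in a way that would derail the computation if carried out.

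In Step~2 you claim that for $S\in S^\L_{\{i\}}$ the term $+1$ arises because $\S'$, having dropped to rank $i-1$, is no longer counted in $r_{D'}(S)$. This is false: $r(\S')=i-1<i=r(S)$ gives $\S'<^s S$, so $\S'$ still contributes $k$ to $r_{D'}(S)$, just as $\S$ did to $r_D(S)$. The $+|S^\L_{\{i\}}|$ comes instead from the \emph{blue} arrow: $\W$ has rank $i+k$ and does not precede $S$ in sweep order, whereas $\W'$ has rank $i=r(S)$ and lies to the right of $S$, so $\W'<^s S$ and $r_{D'}(S)$ picks up a new $-1$. In the paper this is written compactly as $r_D(S)-r_{D'}(S)=-k\,\chi(\S'<^s S<^s\S)+\chi(\W'<^s S<^s\W)$, and the second indicator is responsible for the $|S^\L_{\{i\}}|$ term.

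In Step~1 your claim that ``the only arrows whose $<^s$-relation to $\S$ changes are those with starting rank exactly $i-1$'' is also incomplete: blue arrows in $W^\R_{\{i\}}$ satisfy $W<^s\S$ (same rank $i$, $W$ to the right) but $\S'<^s W$ (since $r(\S')=i-1<i$), so they change too. Consequently your Step~3 misidentifies the mechanism for $-c^b(B_1)-c^b(B_2)$. These two terms arise entirely inside the self-contribution $r_D(\S)-r_{D'}(\S')$: the blue arrows $W$ with $\S'<^s W<^s\S$ are precisely $W^\L_{\{i-1\}}\cup W^\R_{\{i\}}$, and one then identifies $|W^\L_{\{i-1\}}|=c^b(B_1)$ and $|W^\R_{\{i\}}|=c^b(B_2)$ from the picture. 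The zero-row-count property is not the engine here, and ``the change of one blue arrow's position alters the blue-segment counts'' is not what is happening.
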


\begin{proof}
For each red arrow $S\neq \S$ in $\bD$, it is also in $\bD'$. We need to compute the difference of its corresponding ranks in $D$ and $D'$. Clearly, this difference is given by
$$ k \chi(\S \m S) - k \chi(\S'\m S) - \chi(\W \m S)+ \chi(\W' \m S), $$
since all the other terms cancel.

This can be simplified as
$$ -k \chi(\S'\m S \m \S) + \chi( \W' \m S \m \W),$$
since $\S' \m \S$ and $\W' \m \W$. Now $\chi(\S'\m S \m\S)=1$ only when $S\in S_{\{i-1\}}^\L$ and $\chi( \W' <^s S < \W)=1$ only when $S \in S^\L_{\{i\}}$.

By summing over all such $S$, the difference becomes
\begin{align}
-k \times |S_{\{i-1\}}^\L|+|S^\L_{\{i\}}|.\label{area-diff-noS}
\end{align}

Finally the difference of the rank for $\S$ in $\bD$ and the rank for $\S'$ in $\bD'$ is given by
\begin{align}
&\sum_{S \neq \S} \ell(S) (\chi(S \m \S)-\chi(S \m \S')) - \sum_{W \neq \W} (\chi(W \m \S)-\chi(W \m \S')) \nonumber\\
=&\sum_{S \neq \S} \ell(S) \chi(\S' \m S \m \S) - \sum_{W \neq \W} \chi(\S' \m W \m \S), \nonumber
\end{align}
since $\S' \m \S$ and $\W' \m \W$. Now $\chi(\S'\m S \m\S)=1$ only when $S\in S_{\{i-1\}}^\L$ and $\chi( \W' <^s W < \W)=1$ only when $W \in W^\L_{\{i-1\}} \bigcup W^\R_{\{i\}}$.

By summing over all such $S$ and $W$, the difference becomes
\begin{align}
r(\S)-r({\S'})&=\sum_{S\in S^\L_{\{i-1\}}}\ell(S) - |W^\L_{\{i-1\}}|-|W^\R_{\{i\}}| \nonumber \\
&=\sum_{S\in S^\L_{\{i-1\}}}\ell(S) - c^b(B_1)-c^b(B_2).\label{area-diff-S}
\end{align}
where $c^b(B_1), c^b(B_2)$ denote blue segment counts in the corresponding regions in Figure \ref{fig:the-cell} (left picture).
By Proposition \ref{prop--row-zero},
we have $W^\L_{\{i-1\}}=c^b(B_1)$ and $W^\R_{\{i\}}=c^b(B_2)$.

The proposition then follows by adding the two formulas (\ref{area-diff-noS}) and (\ref{area-diff-S}).
\end{proof}


%
%

\begin{prop}\label{rec-dinv}
Let $\overline{D}'$ be obtained from $\overline{D}$ by removing an area cell. Then
\begin{align}
\dinv(\overline{D})-\dinv(\overline{D}')&=\sum_{S\in S^\L_{\{i-1\}}}\ell(S) - k \times |S_{\{i-1\}}^\L| + |S^\L_{\{i\}}| -c^b(B_1)- c^b(B_2)\nonumber
\end{align}
\end{prop}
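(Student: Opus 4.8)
The plan is to expand $\dinv(\bD)-\dinv(\bD')$ straight from its defining formula and match it, term by term, with the expression for $\area(D)-\area(D')$ obtained in Proposition~\ref{rec-area}. I would begin with the elementary remark that the arrows of $\bD$ and $\bD'$ are in an obvious bijection: every arrow other than $\S$ and $\W$ sits in the same slot in both paths and keeps both its starting rank and its ending rank, while $\S$ (with rank interval $[i,i+k]$) is replaced by $\S'$ (interval $[i-1,i+k-1]$) and $\W$ (starting rank $i+k$) by $\W'$ (starting rank $i$). Hence every pair of unchanged arrows contributes equally to the two dinvs, and $\dinv(\bD)-\dinv(\bD')$ collapses to a finite sum over the pairs that involve $\S$ or $\W$ (equivalently $\S'$ or $\W'$).

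Next I would split $\dinv=\mathrm{SD}+\mathrm{RD}$ into its sweep part and red part and list the surviving pairs. For $\mathrm{RD}$: the red--red pairs $(S,\S)$ with $S<\S$ and $(\S,S)$ with $\S<S$. For $\mathrm{SD}$: the pairs $(W,\S)$ with $W\in W^\L_{\mathbb{N}}$, the pairs $(\W,S)$ with $S\in S^\R_{\mathbb{N}}$, and the single pair $\{\S,\W\}$, which contributes $0$ in $\bD$ (there $\S$ lies to the left of $\W$) but contributes $\chi(\W'\to\S')=1$ in $\bD'$. Throughout I would use the sweeping relation in its unwound form: for $W$ to the left of $S$ one has $W\to S$ exactly when $r(S)\le r(W)\le \r(S)$.

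The red part is governed by the maximality of $r(\S)=i$: every other red arrow has starting rank at most $i$, and every red arrow to the right of $\S$ has starting rank at most $i-1$. Evaluating the red--dinv contributions of the surviving pairs in $\bD$ and in $\bD'$ is then a routine case analysis, the one subtle feature being an off-by-one governed by whether $\r(S)\ge i+k$; the outcome is
$$\mathrm{RD}(\bD)-\mathrm{RD}(\bD')=\sum_{S\in S^\L_{\{i-1\}}}\ell(S)-k\,|S^\L_{\{i-1\}}|+|S^\L_{\{i\}}|-\#\{S\ne\S\text{ red}:\r(S)\ge i+k\}.$$
On the sweep side, the condition $r(S)\le r(W)\le\r(S)$ shows that replacing $\S$ by $\S'$ changes $(W,\S)$ by $\chi(r(W)=i+k)-\chi(r(W)=i-1)$ for each $W\in W^\L_{\mathbb{N}}$, that replacing $\W$ by $\W'$ changes $(\W,S)$ by $-\chi(i\le\r(S)\le i+k-1)$ for each $S\in S^\R_{\mathbb{N}}$, and, adding the new pair $\{\S,\W\}$,
$$\mathrm{SD}(\bD)-\mathrm{SD}(\bD')=|W^\L_{\{i+k\}}|-|W^\L_{\{i-1\}}|-\#\{S\in S^\R_{\mathbb{N}}:i\le\r(S)\le i+k-1\}-1.$$

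Finally I would add the two displays and eliminate the surviving counts with Proposition~\ref{prop--row-zero}. The identities $c^b(B_1)=|W^\L_{\{i-1\}}|$ and $c^b(B_2)=|W^\R_{\{i\}}|$ are already used in the proof of Proposition~\ref{rec-area}. Counting red and blue segments in row $i+k-1$ of $\bD$ (where $\S$ contributes a red segment and $\W\in W^\R_{\{i+k\}}$ a blue one) gives $\#\{S\ne\S\text{ red}:\r(S)\ge i+k\}=|W^\L_{\{i+k\}}|+|W^\R_{\{i+k\}}|-1$, and applying the same segment count to the tail of $\bD$ (the sub-path after $\W$, running from height $i+k-1$ down to $0$) at rows $i-1$ and $i+k-1$ gives $|W^\R_{\{i\}}|=1+\#\{S\in S^\R_{\mathbb{N}}:\r(S)\ge i\}$ and $|W^\R_{\{i+k\}}|=1+\#\{S\in S^\R_{\mathbb{N}}:\r(S)\ge i+k\}$, hence $|W^\R_{\{i\}}|-|W^\R_{\{i+k\}}|=\#\{S\in S^\R_{\mathbb{N}}:i\le\r(S)\le i+k-1\}$. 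Substituting these makes every red-arrow-by-ending-rank count cancel and leaves exactly $\sum_{S\in S^\L_{\{i-1\}}}\ell(S)-k\,|S^\L_{\{i-1\}}|+|S^\L_{\{i\}}|-c^b(B_1)-c^b(B_2)$, which is the claim. I expect the main obstacle to be precisely this final bookkeeping: because the red--dinv contributions are weighted by differences of ranks rather than by $0/1$ they create the off-by-one corrections, and one has to be careful about whether $\W$ is counted inside $W^\R_{\{i+k\}}$ and get the tail row counts exactly right so that these corrections cancel.
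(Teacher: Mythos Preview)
Your proposal is correct and follows essentially the same route as the paper's proof: the same split into sweep dinv and red dinv, the same case analysis on which of $\S,\W$ (resp.\ $\S',\W'$) is involved, and the same use of the zero-row-count property to close up; your intermediate expressions for $\mathrm{SD}(\bD)-\mathrm{SD}(\bD')$ and $\mathrm{RD}(\bD)-\mathrm{RD}(\bD')$ are exactly the paper's formulas (\ref{dinv-diff-sweep}) and (\ref{dinv-diff-red}) once one translates the region counts $c^r(T_1),c^r(T_2),c^r(B_2),c^b(T_1)$ into your set notation. The only cosmetic difference is in the final cancellation: the paper applies the alternating-segments argument separately to the left half (row $i+k-1$, giving $c^b(T_1)=c^r(T_1)$) and the right half (row $i-1$, giving $c^r(B_2)+1=c^b(B_2)$), whereas you apply it to the full row $i+k-1$ and then twice to the tail after $\W$; just be sure, when you write it up, that the ``$+1$'' in $|W^\R_{\{i+k\}}|=1+\#\{S\in S^\R:\r(S)\ge i+k\}$ is attributed to $\W\in W^\R_{\{i+k\}}$ (at row $i+k-1$ the tail itself has equal red and blue counts, not blue $=$ red $+1$).
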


\begin{proof}
We give the $\dinv$ recursion $\dinv(\overline{D})-\dinv(\overline{D'})$ that can be stated two parts as follows:

Part $1$: The difference for sweep $\dinv$ coming from $(W_i\rightarrow S_j).$

Since $\overline{D'}$ is obtained from $\overline{D}$ by replacing the solid arrows $\S ,\W$ by dashed arrows $\d ,\W'$, we can divide the contribution of a pair $(W_i \rightarrow S_j)$ to the difference into four cases.

(1) Both $W_i$ and $S_j$ are not in the displayed arrows. The contribution in this case is always $0$.


(2) Both $W_i$ and $S_j$ are in the displayed arrows. This can only happen when $(\W, \S)$ in $\overline{D}$ (no $\dinv$) becomes $(\W', \d)$ in $\overline{D'}$ ($1$ $\dinv$). Therefore the contribution to the difference in this case is $-1$.

(3) Only $W_i$ is one of the displayed arrows. This means $(\W, S_j)$ in $\overline{D}$ becomes $(\W', S_j)$ in $\overline{D'}$.
Observe that $\#\{(\W\to S_j)\}= c^r(T_2)$ and $\#\{(\W'\to S_j)\}=c^r(B_2)$.
Therefore the contribution to the difference in this case is
$c^r(T_2) - c^r(B_2)$.

(4) Only $S_j$ is in the displayed arrows. This means $(W_i,\S)$ in $\overline{D}$
becomes $(W_i, \d)$ in $\overline{D'}$.
Their contribution to the difference is $1$ if $W_i$ has a blue segment in $T_1$, $-1$ if $W_i$ has a blue segment in $B_1$ and $0$ if $W_i$ does not have a segment in neither $T_1$ or $B_1$.
Therefore the contribution to the difference in this case is $c^b(T_1)-c^b(B_1)$.

 where $c^b(B_1), c^b(T_1)$ denote blue segment counts and $c^r(B_2), c^r(T_2)$ denote red segment
counts in the corresponding regions in Figure \ref{fig:the-cell} (right picture).

So the total contribution to the difference in this part is
\begin{align}
-1 + c^r(T_2) - c^r(B_2)+c^b(T_1)-c^b(B_1).\label{dinv-diff-sweep}
\end{align}

Part 2: For red $\dinv$, we need to consider three cases.

Case 1: $\S, \S'$ are not involved. Then $(S_i,S_j)$ in $\bD$ becomes $(S_i,S_j)$ in $\bD'$ and the $\dinv$ difference is $0$.

Case 2: $(S_i,\S)$ in $\bD$ becomes $(S_i,\S')$ in $\bD'$. The red $\dinv$ for this type in $\bD$ is given by
\begin{multline*}
\sum_{S<\S} \chi(r(S) \geq r(\S) \;\&\; \r(\S) > \r(S))  (\r(\S)- \r(S)) \\
+ \sum_{S<\S}\chi(r(S) < r(\S) \;\&\; \r(S) > \r(\S)) (\r(S)- \r(\S)).
\end{multline*}
Recall that by our choice of $\S$, $r(S)>r(\S)=i$ is impossible. Thus the sum becomes
\begin{align*}
 &\sum_{S\in S^\L_{\{i\}} / S^{\L,i+k}_{\{i\}}} (\r(\S)- \r(S)) +\sum_{S\in S^{\L,i+k+1}_{\{0,1,..,i-1\}}} (\r(S)- \r(\S)).\\
 =&\sum_{S\in S^\L_{\{i\}} / S^{\L,i+k}_{\{i\}}} (i+k- \r(S)) +\sum_{S\in S^{\L,i+k}_{\{0,1,..,i-1\}}} (\r(S)- k-i),
\end{align*}
where we have add $0=(\r(S)-k-i)$ for those $S$ with $\r(S)=k+i$.

The red $\dinv$ for this type in $\bD'$ is similar:
\begin{align*}
&\sum_{S\in S^\L_{\{i,i-1\}} / S^{\L,i+k-1}_{\{i,i-1\}}} (k+i-1- \r(S)) +\sum_{S\in S^{\L,i+k}_{\{0,1,..,i-2\}}} (\r(S)- k-i+1) \\
=&\sum_{S\in S^\L_{\{i,i-1\}} / S^{\L,i+k}_{\{i,i-1\}}} (k+i-1- \r(S)) +\sum_{S\in S^{\L,i+k}_{\{0,1,..,i-2\}}} (\r(S)- k-i+1),
\end{align*}
where we have add $0=k+i-1-\r(S)$ for those $S$ with $\r(S)=k+i-1$.

Their difference is given by
\begin{align*}
&-\!\!\!\!\!\!\sum_{S\in S^\L_{\{i-1\}}/S^{\L,i+k}_{\{i-1\}}}\!\!\!(k-\ell(S))+ |S^\L_{\{i\}}/S^{\L, i+k}_{\{i\}}|-|S^{\L, i+k}_{\{0,1,..,i-2\}}|+\sum_{S\in S^{\L,i+k}_{\{i-1\}}} (\r(S)- k-i)\\
=&\!\!\!\!\sum_{S\in S^\L_{\{i-1\}}}\!\!\!(\ell(S)-k)-\!\!\!\!\!\sum_{S\in S^{\L,i+k}_{\{i-1\}}}\!\!\!\!(\ell(S)-k)+ |S^\L_{\{i\}}/S^{\L, i+k}_{\{i\}}|-|S^{\L, i+k}_{\{0,1,..,i-2\}}|+\!\!\!\!\sum_{S\in S^{\L,i+k}_{\{i-1\}}}\!\!\!\!(\ell(S)- k-1)\\
=&\!\!\!\!\!\sum_{S\in S^\L_{\{i-1\}}}(\ell(S)-k)+ |S^\L_{\{i\}}/S^{\L, i+k}_{\{i\}}|-|S^{\L, i+k}_{\{0,1,..,i-2\}}|- |S^{\L,i+k}_{\{i-1\}}|
\end{align*}

Case 3: $(\S,S_j)$ becomes $(\S',S_j)$. The red $\dinv$ in $\bD$ is
$$\sum_{\S <S} \chi(r(\S) \geq r(S) \;\&\; \r(S) > \r(\S)) (\r(S)- \r(\S))$$

since $r(\S)<r(S)$ is impossible by our choice of $\S$. Thus the sum becomes
 $$\sum_{S\in S^{\R,i+k+1}_{\{0,1,..,i-1\}} }(\r(S)-(i+k))=\sum_{S\in S^{\R,i+k}_{\{0,1,..,i-1\}} }(\r(S)-(i+k)).$$

The red $\dinv$ in $\bD'$ is similar:
$$\sum_{\d <S} \chi(r(\d) \geq r(S) \;\&\; \r(S) > \r(\d))(\r(S)- \r(\d))=\!\!\!\!\!\sum_{S\in S^{\R,i+k}_{\{0,1,..,i-1\}} }\!\!\!\!(\r(S)-(i+k-1)).$$
since $r(\d)<r(S) \Rightarrow r(\S)\le r(S)$ is impossible by our choice of $\S$.

Their difference is
$$-|S^{\R,i+k}_{\{0,1,..,i-1\}}|.$$

So the contribution to the difference in this part is
\begin{align}
&\sum_{S\in S^\L_{\{i-1\}}}(\ell(S)-k)+ |S^\L_{\{i\}}/S^{\L, i+k}_{\{i\}}|-|S^{\L, i+k}_{\{0,1,..,i-2\}}|- |S^{\L,i+k}_{\{i-1\}}|-|S^{\R,i+k}_{\{0,1,..,i-1\}}| \nonumber\\
=&\sum_{S\in S^\L_{\{i-1\}}}(\ell(S)-k)+ |S^\L_{\{i\}}|-|S^{\L, i+k}_{\{i\}}|-|S^{\L, i+k}_{\{0,1,..,i-2\}}|- |S^{\L,i+k}_{\{i-1\}}|-|S^{\R,i+k}_{\{0,1,..,i-1\}}| \nonumber\\
=&\sum_{S\in S^\L_{\{i-1\}}}(\ell(S)-k)+ |S^\L_{\{i\}}|-|S^{\L, i+k}_{\{0,1,..,i\}}|-|S^{\R,i+k}_{\{0,1,..,i-1\}}| \nonumber \\
=&\sum_{S\in S^\L_{\{i-1\}}}\ell(S) - k \times |S_{\{i-1\}}^\L|+ |S^\L_{\{i\}}|-c^r(T_1)-c^r(T_2). \label{dinv-diff-red}
\end{align}
where $c^r(T_1), c^r(T_2)$ denote red segment counts in the corresponding regions in Figure \ref{fig:the-cell} (left picture).
Recall that by our choice of $\S$, we have $c^r(T_1) = |S^{\L, i+k}_{\{0,1,..,i\}}|$ and $c^r(T_2) = |S^{\R,i+k}_{\{0,1,..,i-1\}}|$.

The formula (\ref{dinv-diff-sweep}) is
$$-1 + c^r(T_2) - c^r(B_2)+c^b(T_1)-c^b(B_1).$$

The proposition then follows by adding the two formulas (\ref{dinv-diff-sweep}) and (\ref{dinv-diff-red}), and using the
fact $c^b(T_1) = c^r(T_1)$ and $c^r(B_2)+ 1 = c^b(B_2) $, which are consequences of Proposition \ref{prop--row-zero}.
\end{proof}



\section{Proof the area sweeps to bounce\label{s-proof-areatobounce}}
Our proof relies on the inverting sweep map in \cite{Xin-Zhang}.
We will quote some results for the readers' convenience.

\begin{algor}[Filling Algorithm \cite{Xin-Zhang}]

\noindent
Input: The SW-sequence $\SW(D)$ of a $\k$-Dyck path $D\in \CD_{\k}$.

\noindent
Output: A tableau $T=T(D)\in \TAU_{\k}$.

\begin{enumerate}
\item   Start by placing a $1$ in the top row and the first column.

\item  If the second letter in $\SW(D)$ is an $S^*$ we put a $2$ on the top of the second column.

\item   If the second letter in $\SW(D)$ is a $W$ we place $2$ below the $1$.

\item  At any stage the entry at the bottom of the $i$-th column but not in row $k_i+1$ will be called \emph{active}.

\item  Having placed $1,2,\cdots i-1$, we place $i$ immediately below the smallest  active entry if the $i^{th}$ letter in $\SW(D)$ is a $W$, otherwise we place $i$ at the top of the first empty column.

\item   We carry this out recursively until $1,2,\ldots ,n+|\k|$ have  all  been placed.
\end{enumerate}
\end{algor}

\begin{algor}[Ranking Algorithm \cite{Xin-Zhang}]

\noindent
Input: A tableau $T=T(D )\in \TAU_{\k}$.

\noindent
Output: A rank tableau $R(D)$ of the same shape with $T$.

\begin{enumerate}
\item  Successively assign $0, 1, 2, ..., k_1$ to the first column indices of $T$ from top to bottom;

\item For $i$ from $2$ to $n$, if the top index of the $i$-th column is $A+1$, and the rank of index $A$ is $a$, then assign the index $A+1$ rank $a$. Moreover,
the ranks in the $i$-th column are successively $a,a+1,\dots, a+k_i$ from top to bottom.

\end{enumerate}
\end{algor}

 For instance if $D $ is the path in Figure \ref{fig:bouncepath}, with SW-sequence
 \begin{align*}
  \SW(D)=
  \begin{array}
   {ccccccccccccccc}
  S^4 \!\!\!\!\!& S^3 \!\!\!\!\! & W \!\!\!\!\! & W \!\!\!\!\! & W\!\! \!\!\!& S^1 \!\!\!\!\! &W\!\! \!\!\!&S^1\!\!\!\!\! & W \!\!\!\!\!& S^1 \!\!\!\!\! & W \!\!\!\!\! & W \!\!\!\!\! & W \!\!\!\!\! & W \!\!\!\!\!&W
 \end{array},
\end{align*}
then we obtain the  tableau $T(D)$ and $R(D)$ in Figure \ref{fig:filingAl}.
\begin{figure}[!ht]
  $$\hskip .8 in T(D) = \hskip -2.4 in  \vcenter{ \includegraphics[height=1 in]{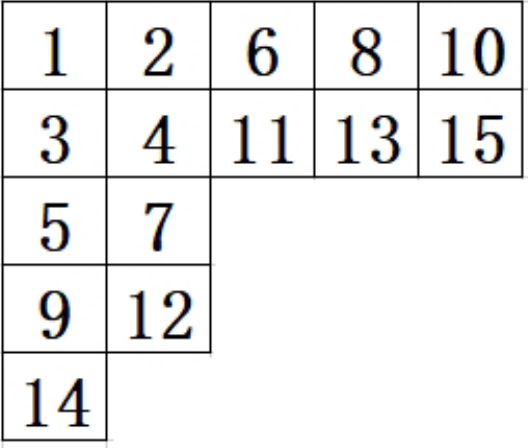}}
  \hskip -1.9in R(D)= \hskip -2.4 in \vcenter{ \includegraphics[height=1 in]{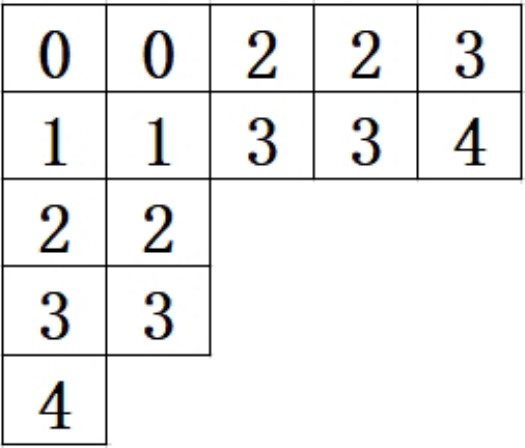}}
  $$
\caption{The filling tableau $T(D)$ and the rank tableau $R(D)$ of the path in Figure \ref{fig:bouncepath}.
\label{fig:filingAl}}
\end{figure}

The following result is a summary of Lemmas 3.1, 3.2 and Theorem 2.14 in \cite{Xin-Zhang}.
\begin{theo}
For a Dyck path $D\in D_{\k}$, Let $\bD$ be the preimage of $D$ on the sweep map. We obtain a Filling tableau $T(D)$
and a Ranking tableau $R(D)$ by Filling algorithm and Ranking algorithm. The Ranking algorithm assigns every index
a rank in $T(D)$ and the ranks are weakly increasing according to their indices. If indices $1,2,\dots,n+|\k|$
are assigned ranks $r_1,r_2,\dots,r_{n+|\k|}$, then the rank sequence of $\bD$ is exactly $(r_1,r_2,\dots,r_{n+|\k|})$.
\end{theo}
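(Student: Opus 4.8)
The plan is to prove that the Filling and Ranking algorithms jointly invert the sweep map, after which both assertions of the theorem are immediate. Fix $D\in\CD_{\k}$, write $\bD=\Phi^{-1}(D)$, and for $1\le j\le n+|\k|$ let $\bar A_j$ be the step of $\bD$ that becomes the $j$-th step of $D$ under the sweep map, so that $\bar A_1<^s\bar A_2<^s\cdots<^s\bar A_{n+|\k|}$; write $\rho_j=r(\bar A_j)$ for its starting rank in $\bD$. Since $<^s$ orders the steps of $\bD$ by rank, the sequence $(\rho_1,\dots,\rho_{n+|\k|})$ is weakly increasing and is precisely the rank sequence of $\bD$ read in the order in which the sweep map reads its steps. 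Hence it suffices to prove that $r_j=\rho_j$ for every $j$, where $r_j$ is the rank attached to the index $j$ by the Ranking algorithm; weak monotonicity of $(r_j)$ then follows for free.

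First I would read off, from the two algorithms, the recursion obeyed by the $r_j$: one has $r_1=0$; if $\sigma_j$ is an $S$ then index $j$ opens a new column and $r_j=r_{j-1}$; and if $\sigma_j=W$ then index $j$ is inserted directly below the smallest active entry $q_j$, so that $r_j=r_{q_j}+1$. I would also record the bookkeeping supplied by the Filling algorithm: the columns of $T(D)$, read left to right, correspond to the up-steps of $D$ in order, hence to the up-steps of $\bD$ in sweep order; the $c$-th column has one more cell than the length of that up-step; and the non-top cells of the columns are exactly the slots into which the $W$-letters, i.e.\ the down-steps, are placed. The theorem then amounts to showing that $\rho_j$ satisfies the same recursion, which I would prove by induction on $j$, carried together with the invariant: after indices $1,\dots,j$ have been placed, the active entries, listed in increasing index order, have weakly increasing ranks taking at most two consecutive values $\{m,m+1\}$, the value $m$ being realized by the smallest active index, and these active entries correspond, rank by rank, to the ``open'' up-passes of the subpath $\bar A_1,\dots,\bar A_j$ of $\bD$, that is, to the up-passes through a horizontal strip of $\bD$ not yet cancelled by a down-pass among the first $j$ steps.

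The inductive step has two cases. If $\bar A_{j+1}$ is an up-step, then it is never the rightmost step of $\bD$ at its rank: the rightmost step starting at a fixed positive level must be a down-step because the path has to return to level $0$, and the rightmost step starting at level $0$ is $\bar A_1$ itself. Since in the sweep order the steps of a given rank form a consecutive block, the step just before $\bar A_{j+1}$ there, namely $\bar A_j$, has the same rank, so $\rho_{j+1}=\rho_j$, which matches the $S$-clause. If $\bar A_{j+1}$ is a down-step, it is a pass through the strip at level $\rho_{j+1}-1$; the crucial point is that within every horizontal strip of $\bD$ the sweep order processes all red (up) passes before all blue (down) passes --- a consequence of the zero-row-count property, Proposition~\ref{prop--row-zero}, together with the definition of $<^s$. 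Therefore, when $\bar A_{j+1}$ is reached, every strip below level $\rho_{j+1}-1$ is already balanced while the strip at level $\rho_{j+1}-1$ still carries an open up-pass; hence $\bar A_{j+1}$ cancels the lowest open up-pass, which by the invariant sits just below the smallest active entry $q_{j+1}$ and has rank $\rho_{j+1}-1$, giving $\rho_{j+1}=r_{q_{j+1}}+1$, the $W$-clause. The base case $j=1$, the well-definedness of the Filling algorithm (and hence of the Ranking algorithm), and the propagation of the invariant under each move are then routine length counts; Proposition~\ref{prop-area} may serve as an independent check on the resulting ranks.

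The step I expect to be the main obstacle is making the invariant precise and, in particular, justifying the tie-breaking inside it: that ``smallest active entry'' in the Filling algorithm selects exactly the open up-pass which the incoming down-step of $\bD$ cancels in the natural order of $\bD$, equivalently that the Filling algorithm reproduces the ``right-to-left at equal level'' rule built into the sweep map. This requires controlling how the alternating red/blue pattern along each strip (Proposition~\ref{prop--row-zero}) is permuted in passing from the natural order of $\bD$ to the sweep order, and is in substance the content of Lemmas~3.1 and~3.2 of \cite{Xin-Zhang}; everything else is bookkeeping.
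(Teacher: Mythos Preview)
The paper does not give its own proof of this theorem; it is presented as a summary of Lemmas~3.1, 3.2 and Theorem~2.14 of \cite{Xin-Zhang}, with no argument in the present text. Your proposal goes further than the paper by setting up an explicit inductive framework: you read off the recursion $r_1=0$, $r_j=r_{j-1}$ for $S$-letters, $r_j=r_{q_j}+1$ for $W$-letters, and match it against the sweep-order ranks $\rho_j$. Your up-step case is correct and clean (an up-step with $j+1>1$ is never rightmost at its level, so $\rho_{j+1}=\rho_j$), and the two-value invariant on the ranks of active entries can indeed be maintained purely from the Filling and Ranking rules.

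But, as you yourself flag, the crux is the down-step case: showing that ``smallest active entry'' in the Filling algorithm selects exactly the open up-pass that the incoming down-step of $\bD$ cancels, i.e.\ that the tie-breaking rule of the Filling algorithm reproduces the right-to-left rule built into $<^s$. You defer this to Lemmas~3.1 and~3.2 of \cite{Xin-Zhang}, which is precisely where the paper sends the reader. So your proposal and the paper's treatment bottom out at the same external reference; the scaffolding you add is a plausible reconstruction of how those lemmas assemble into the statement, but it is not a self-contained proof, and the appeal to Proposition~\ref{prop--row-zero} alone does not suffice to pin down the tie-breaking.
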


Now we are ready to prove that the $\area$ sweeps to $\bounce$.
\begin{proof}[Proof of Theorem \ref{dinv-area-bounce1} part 2]
Note that $\area(\bD)=\bounce(D)$, and the ranks of the south ends of $\bD$ are just the first row entries of $R(D)$. It suffices to show that
the tableau $R^b(D)$ is the same as the ranking tableau $R(D)$.

Since both tableaux have columns of the form $a,a+1,a+2,\dots$ from top to bottom,
and have the first row weakly increasing, it suffices to show the following claim.

Claim: $R(D)$ has $v_i$ $i$'s for $i=0,1,\dots, $ in its first row.

We prove the claim by induction on $i$.

The base case is when $i=0$. By definition,
$D$ starts with $v_0$ north steps followed by an east step.
Now the filling algorithm will produce
$1,2,\dots, v_0$ in the first row, with $v_0+1$ under $1$. It follows that $R(D)$ has $v_0$ $0$'s in the first row, and has only $v_0$ $0$' since the rank of $v_0+1$ is already $1$.
The claim then holds true in this case.

Assume the claim holds true for $i$ and we need to show the case $i+1$.

Consider the bouncing path part $P_{i}$ goes north to $Q_{i}$ (in $D$), goes east to $P_{i+1}$, and goes north to $Q_{i+1}$ (in $D$).
We have the following facts:

i) From $Q_{i}$ to $Q_{i+1}$ in $D$, there are $h_{i}$ east steps  and $v_{i+1}$ north steps, with indices $\sum_{j=0}^{i-1} (v_j+h_j) + v_i + s$
for $s=1,2,\dots, v_{i+1}+h_i$.

ii) Suppose now we have filled the indices up to $Q_i$ in the filling tableaux.
By the induction hypothesis, $R^i$ agrees with $R(D)$, so that these indices corresponds to the ranks no more than $i$ below the first row of $R^i$, with
the number of $j+1$'s being equal to $h_{j}$ for $j\le i-1$. By the filling algorithm, the bottom indices are active only when its rank
is $i$ and the cell under it has rank $i+1$ in $R^i$. This implies, the index $1+\sum_{j=0}^{i-1} (v_j+h_j)+v_i$, corresponding to the west end $Q_i$,
must be filled under one of the active $i$'s. Consequently, the indices of the $h_i$ east steps in the path from $Q_i$ to $Q_{i+1}$ must be filled in the $h_i$ cells
of rank $i+1$ in $R^i$, and the indices of the $v_{i+1}$ north steps in the path from $Q_i$ to $Q_{i+1}$ must also have rank $i+1$ by the ranking algorithm. To see that there is no more
rank $i+1$ for north steps, we observe that the next index corresponds to the west end $Q_{i+1}$. By the filling algorithm, this index cannot be put below an index of rank less than $i+1$,
and hence must have rank $i+2$.
\end{proof}

\section{Summary\label{s-summary}}
We have defined the $\dinv$, $\area$, and $\bounce$ statistics for $\k$-Dyck paths, and proved that the sweep map takes $\dinv$ to $\area$, and $\area$ to $\bounce$.
Such a result was first known by  Haiman and Haglund for classical (or ordinary) Dyck paths;
The result was extended for $k$-Dyck paths by Loehr. Our result includes the two mentioned cases as special cases.

The $\dinv$ sweeps to $\area$ result was also known for $(m,n)$ rational Dyck paths by \cite{Loehr-Warrington,Gorsky-Mazin,Mazin,dinv-area}. Our work for $\k$-Dyck paths are inspired by Garsia-Xin's visual proof in \cite{dinv-area}.
We should mention that such a result also has a parking function version for ordinary Dyck paths. See, e.g., \cite{Hag-book08}.
Finding a proper extension of $\k$-Dyck paths to $\k$-parking functions is one of our future projects.

The $\bounce$ statistic was only known for classical Dyck paths, $k$-Dyck paths, and remains unknown for rational Dyck paths.

We also investigated the $q,t$-symmetry of $C_{\lambda}(q,t)$. The symmetry is easily proved when the length of $\lambda$ is $\ell(\lambda)=2$,
and hopefully will be proved for the case $\ell(\lambda)=3$ in an upcoming paper.
The symmetry no longer holds in general for $n\ge 4$. But computer experiments suggest the following conjecture.

\begin{conj}
Let $\lambda=((a+1)^s,a^{n-s})$ be consisting of $s$ copy of $a+1$'s  and $n-s$ copy of $a$'s. Then $C_{\lambda}(q,t)$ is $q,t$-symmetric.
\end{conj}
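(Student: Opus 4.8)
The plan is to prove the two bi-statistic identities separately, each by an induction that strips off one unit of the statistic being tracked. For the first identity, $\dinv(\oD)=\area(D)$ with $D=\Phi(\oD)$, I would induct on $\area(\oD)$, following the strategy of \cite{dinv-area}. In the base case $\area(\oD)=0$ the path $\oD$ is forced to be $S^{k_1}W^{k_1}S^{k_2}W^{k_2}\cdots S^{k_n}W^{k_n}$, its sweep image is $D=S^{k_n}S^{k_{n-1}}\cdots S^{k_1}W^{|\k|}$, and a direct count shows that both sides equal $(n-1)k_n+(n-2)k_{n-1}+\cdots+k_2$ (group each red arrow $S^{k_t}$ with the $k_t$ blue arrows following it, and check that the two regimes $k_t\ge k_j$ and $k_t<k_j$ each contribute exactly $k_j$ to $\dinv$). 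For the inductive step I would take the rightmost red arrow $\S$ of maximal rank among all red arrows; it must be followed by a blue arrow $\W$, and replacing $\S\W$ by $\W'\S'$ removes exactly one area cell, giving a $\k$-Dyck path $\oD'$ with $\area(\oD')=\area(\oD)-1$. By the inductive hypothesis $\dinv(\oD')=\area(\Phi(\oD'))$, so it suffices to prove $\area(\Phi(\oD))-\area(\Phi(\oD'))=\dinv(\oD)-\dinv(\oD')$. I would compute each difference using the rank formula $r(A)=\sum_{\bar S<^s\bar A}\ell(\bar S)-\#\{\bar W<^s\bar A\}$ of Proposition~\ref{prop-area} together with the zero-row-count identity $c^r(j)=c^b(j)$ of Proposition~\ref{prop--row-zero}, expressing both differences as the \emph{same} expression in the red and blue segment counts of the regions surrounding the removed cell.

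For the second identity, $\area(\oD)=\bounce(D)$, I would not induct on area but instead compare two tableaux attached to $D$: the bounce tableau $R^b(D)$ produced by the Bouncing Algorithm, and the rank tableau $R(D)$ produced by the Filling and Ranking Algorithms of \cite{Xin-Zhang}, which together invert the sweep map. Since $\bounce(D)$ is the sum of the first-row entries of $R^b(D)$ while $\area(\oD)=\sum_S r(S)$ is the sum of the first-row entries of $R(D)$, and since both tableaux have columns of the shape $a,a+1,a+2,\dots$ from the top, it suffices to show $R^b(D)=R(D)$; for this it is enough to show that the first row of $R(D)$ contains exactly $v_i$ copies of $i$ for every $i\ge 0$. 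I would prove this claim by induction on $i$: for $i=0$ the Filling Algorithm places indices $1,\dots,v_0$ in the first row and index $v_0+1$ directly under the $1$, so $R(D)$ has exactly $v_0$ zeroes; for the step from $i$ to $i+1$ one follows the bounce path from the west end $Q_i$ of $D$ to $Q_{i+1}$ (which passes $h_i$ east steps and $v_{i+1}$ north steps), observes that the index of $Q_i$ must be placed under an active cell of rank $i$, deduces that the $h_i$ east-step indices fill the rank-$(i+1)$ cells of $R^i$ and the $v_{i+1}$ north-step indices also acquire rank $i+1$, and finally notes that the next index, corresponding to $Q_{i+1}$, cannot be placed under a cell of rank $<i+1$ and so receives rank $i+2$, so no further $(i+1)$'s appear in the first row.

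The genuine obstacle is the inductive step of Part~1. The $\dinv$ recursion must be matched against the $\area$ recursion, and on the $\dinv$ side it breaks into a sweep-$\dinv$ contribution with four sub-cases (according to which, if any, of $\S$ and $\W$ occurs in a pair $(W_i,S_j)$) and a red-$\dinv$ contribution with three cases (according to whether $\S$ is the left or the right member of a pair of red arrows), each producing several telescoping sums over the nested sets $S^{\L}_{\U}$, $S^{\L,a}_{\U}$, $S^{\R,a}_{\U}$ that have to be bookkept with care; the final cancellation that forces the $\dinv$ difference to equal the $\area$ difference rests on the row-balance relations among red and blue segment counts near the removed cell (e.g.\ $c^b(T_1)=c^r(T_1)$ and $c^r(B_2)+1=c^b(B_2)$) supplied by Proposition~\ref{prop--row-zero}. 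By comparison, Part~2 is largely mechanical once the Filling/Ranking machinery of \cite{Xin-Zhang} is imported, so I expect the bulk of the effort to lie in the careful verification of the two recursions of Part~1.
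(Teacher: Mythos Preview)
Your proposal addresses the wrong statement. The item you were asked to prove is the \emph{conjecture} that $C_{\lambda}(q,t)$ is $q,t$-symmetric when $\lambda=((a+1)^s,a^{n-s})$. The paper does not prove this; it is stated as an open conjecture supported only by computer experiments, with the $s=0$ and $s=n$ cases reducing to the known $k$-Dyck case. What you have written is instead a faithful outline of the paper's proof of Theorem~\ref{dinv-area-bounce1}, namely that the sweep map takes $\dinv$ to $\area$ and $\area$ to $\bounce$. That theorem yields the equality
\[
\sum_{\lambda(\k)=\lambda}\sum_{D\in\CD_{\k}} q^{\dinv(D)}t^{\area(D)}
=\sum_{\lambda(\k)=\lambda}\sum_{D\in\CD_{\k}} q^{\area(D)}t^{\bounce(D)},
\]
but this is \emph{not} the symmetry $C_{\lambda}(q,t)=C_{\lambda}(t,q)$: having the same polynomial written via two different bistatistics does not give $q\leftrightarrow t$ invariance. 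Indeed the paper explicitly notes that for general $\lambda$ with $\ell(\lambda)\ge 4$ the symmetry fails, even though Theorem~\ref{dinv-area-bounce1} holds for all $\k$-Dyck paths.

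So the genuine gap is not a technical one inside your argument but a complete mismatch of target: nothing in your plan bears on $q,t$-symmetry. A proof of the conjecture would require either an involution on $\bigcup_{\lambda(\k)=\lambda}\CD_{\k}$ exchanging $(\dinv,\area)$ with $(\area,\dinv)$ (or equivalently $(\area,\bounce)$ with $(\bounce,\area)$), or an algebraic/generating-function argument specific to the near-rectangular shape $((a+1)^s,a^{n-s})$. Your outline, while a correct sketch of the paper's proof of its main theorem, provides neither.
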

The $s=0$ and $s=n$ cases reduce to $k$-Dyck paths, and the conjecture holds true in these cases. 

\end{document}